\newcommand\blfootnote[1]{
  \begingroup
  \renewcommand\thefootnote{}\footnote{#1}
  \addtocounter{footnote}{-1}
  \endgroup
}
\newcommand{\menge}[2]{\{{#1} ~|~ {#2}\}} 
\newcommand{\emp}{\ensuremath{{\varnothing}}}
\newcommand{\scal}[2]{\left\langle{#1}\mid {#2} \right\rangle} 
\newcommand{\HH}{\ensuremath{\mathcal H}}
\newcommand{\HHH}{\ensuremath{\boldsymbol{\mathcal H}}}
\newcommand{\RR}{\ensuremath{\mathbb R}}
\newcommand{\NN}{\ensuremath{\mathbb N}}
\newcommand{\dom}{\ensuremath{\operatorname{dom}}}
\newcommand{\prox}{\ensuremath{\operatorname{prox}}}
\newcommand{\ran}{\ensuremath{\operatorname{ran}}}
\newcommand{\zer}{\ensuremath{\operatorname{zer}}}
\newcommand{\gra}{\ensuremath{\operatorname{gra}}}
\newcommand{\vv}{\ensuremath{\boldsymbol{v}}}
\newcommand{\xx}{\ensuremath{\boldsymbol{x}}}
\newcommand{\yy}{\ensuremath{\boldsymbol{y}}}
\newcommand{\ee}{\ensuremath{\boldsymbol{e}}}
\newcommand{\zz}{\ensuremath{\boldsymbol{z}}}
\newcommand{\uu}{\ensuremath{\boldsymbol{u}}}
\newcommand{\ww}{\ensuremath{\boldsymbol{w}}}
\newcommand{\BB}{\ensuremath{\boldsymbol{B}}}
\newcommand{\E}{\ensuremath{\mathbf{E}}}
\newcommand{\AAA}{\ensuremath{\boldsymbol{A}}}
\newcommand{\BBB}{\ensuremath{\boldsymbol{B}}}
\newcommand{\QQ}{\ensuremath{\boldsymbol{Q}}}
\newcommand{\Fix}{\ensuremath{\operatorname{Fix}}}
\newcommand{\Id}{\ensuremath{\operatorname{Id}}}
\newcommand{\weakly}{\ensuremath{\rightharpoonup}}
\newtheorem{theorem}{Theorem}
\newtheorem{lemma}{Lemma}
\theoremstyle{plain}{\theorembodyfont{\rmfamily}
\newtheorem{problem}{Problem}}
\theoremstyle{plain}{\theorembodyfont{\rmfamily}
\newtheorem{remark}{Remark}}
\theoremstyle{plain}{\theorembodyfont{\rmfamily}

\def\endproof{\hfill\vbox{\hrule height0.6pt\hbox{\vrule height1.3ex
width0.6pt\hskip0.8ex\vrule width0.6pt}\hrule height0.6pt}}

\begin{document}

\title{\sffamily\LARGE 
Inertial Three-Operator Splitting Method and Applications
}
\author{ Volkan Cevher ~~~~~ B$\grave{\text{\u{a}}}$ng C\^ong V\~u ~~~~~ Alp Yurtsever \\[5mm]
Laboratory for Information and Inference Systems (LIONS) \\[0.5em]
Ecole Polytechnique Federale de Lausanne (EPFL), Switzerland \\[0.5em]
volkan.cehver@epfl.ch; bang.vu@epfl.ch; alp.yurtsever@epfl.ch
}
\date{}
\maketitle

\blfootnote{This work is presented at SIAM Conference on Optimization (OP17) in Vancouver, British Columbia, Canada on the 23rd of May 2017 by B$\grave{\text{\u{a}}}$ng C\^ong V\~u.}

\begin{abstract}
We introduce an inertial variant of the forward-Douglas-Rachford splitting and analyze its convergence. We specify an instance of the proposed method to the three-composite convex minimization template. We provide practical guidance on the selection of the inertial parameter based on the adaptive starting idea. Finally, we illustrate the practical performance of our method in various machine learning applications. 
\end{abstract}

\section{Introduction}

Consider the following abstract problem based on monotone inclusion of the sum of three-operators: 

\begin{problem} 
\label{prob:monotone-inclusion} ({Three-operators sum problem}) \\
Let $\beta$ be a strictly positive number,
$(\HHH,\scal{\cdot}{\cdot})$ be a real Hilbert space, $\AAA\colon\HHH\to2^{\HHH}$ 
and $\BB\colon\HHH\to 2^{\HHH}$ be maximally monotone operators,
and $\QQ\colon\HHH\to{\HHH}$ be a $\beta$-cocoercive operator, i.e.,
 \begin{equation*}
\scal{\xx-\yy}{\QQ\xx-\QQ\yy}\geq \beta \scal{\QQ\xx -\QQ\yy}{\QQ\xx-\QQ\yy}, \qquad
\forall \xx,\yy\in\HHH
 \end{equation*}
Let $\boldsymbol{\mathcal{X}^\star}$ be the set of all points $\xx$ in $\HHH$ such that 
\begin{equation*}
\mathbf{0} \in\AAA\xx + \BB\xx+\QQ\xx.
\end{equation*}
The problem is to find a point $\xx^\star$ in $\boldsymbol{\mathcal{X}^\star}$.  

\noindent \textbf{Assumption.} 
We assume that $\boldsymbol{\mathcal{X}^\star}$ is not empty. 
\end{problem}

Problem~\ref{prob:monotone-inclusion} generalizes the common two-operator sum problem templates,  including the sum of two maximally monotone operators (with $\QQ = 0$), 
and the sum of a maximally monotone operator and a cocoercive operator (with $\BBB=0$). The former can be solved by using forward-backward splitting \cite{Combettes2005signal},  and the latter by Douglas-Rachford splitting.
Moreover, it also covers the problem template of the forward-Douglas-Rachford splitting \cite{Briceno2015fdr}, where $\BBB$ is assumed to be the normal operator of a closed vector subspace.  

The general template of Problem~\ref{prob:monotone-inclusion} is recently solved in the three operator splitting framework \cite{Davis2017threeop}.
In this paper, we introduce and investigate the convergence characteristics of an inertial forward-Douglas-Rachford splitting method for solving Problem~\ref{prob:monotone-inclusion}. 

Such operator splitting schemes for finding the set of zero points of maximally monotone operators has a large number of applications in machine learning, statistics, signal processing and computer science in disguise. In particular, the three-composite convex optimization directly fits into this inclusion framework: 

\begin{problem}({Three-composite convex minimization}) \label{prob:convex-optimization} \\
Let $f\colon\RR^d\to\left]-\infty,+\infty\right]$ and $g\colon\RR^d\to\left]-\infty,+\infty\right]$ be  proper lower semicontinuous convex functions, and
let $h\colon\RR^d\to\RR$ be a differentiable convex function with $L$-Lipschitz 
continuous gradient, i.e., $ \forall \xx, \yy\in\RR^d$:
$$\| \nabla h(\xx) - \nabla h(\yy)\| \leq L \| \xx - \yy \|.$$
\noindent Then, we call the following template as the three-composite convex minimization problem:
\begin{equation*}
\begin{aligned}
& \underset{\xx \in \mathbb{R}^d}{\text{minimize}} &  f(\xx) + g(\xx) + h(\xx).
\end{aligned}
\end{equation*}
\end{problem}

Problem~\ref{prob:convex-optimization} is a special instance of Problem~\ref{prob:monotone-inclusion} and it covers many classical convex optimization templates as a special case, including the classical composite (objective is the sum of a smooth and a nonsmooth functions) and the constrained convex minimization problems. 
These special instances of Problem~\ref{prob:convex-optimization} can be solved using the proximal  gradient methods. 

Clearly, Problem~\ref{prob:convex-optimization} can be solved with the classical proximal gradient methods using the $\prox$ operator  (cf. \Cref{sec:notation}) of the joint term $f+g$. 
In contrast, our method makes use of 
the $\prox$ operators of $f$ and $g$ separately, similar to the methods described in \cite{Davis2017threeop,Raguet2013}. 
Note that the computation of the joint $\prox$ is more expensive compared to the individual $\prox$ operators, which can be observed even in the simplest examples with $f$ and $g$ being indicator functions of two convex sets (cf. \Cref{sec:DNNproj}). 

Inertial methods in monotone inclusions are first proposed in \cite{Alvarez2001ipm,Alvarez2004ihp} for finding the set of zero points of a single maximally monotone operator. 
Inertial variants of forward-backward and Douglas-Rachford splitting are investigated  in \cite{Moudafi2003,Lorenz2015ifb} and \cite{Bot2015iner} respectively. 
Some other extensions and modifications of the aforementioned results can be found in \cite{Moudafi2004,Bot2016iner,rosasco2016stochastic,Bednarczuk2016,Mainge2007,Pesquet2012}. 

Inertial methods in monotone inclusions are closely related with the accelerated proximal gradient method and its variants in convex optimization theory \cite{Nesterov1983acc,Nesterov2005smooth,Guler1992,Villa2013, attouch2016,attouch2017,Beck2009fista,Chambolle2015,Bonettini2017,Aujol2015, Moudafi2004,Bot2016iner,rosasco2016stochastic,Bednarczuk2016,Mainge2007,Pesquet2012}. 

To our knowledge, our framework presents the first purely primal inertial splitting method for solving Problem~\ref{prob:monotone-inclusion} without further assumptions. It is based on a combination of the three operator splitting method \cite{Davis2017threeop} and the inertial forward-backward splitting \cite{Moudafi2003,Lorenz2015ifb}, and it recovers these two schemes as a special case. After we submitted this manuscript for review, a similar approach has appeared very recently in a concurrent work \cite{Cui2019}. 

The paper is organized as follows: 
\Cref{sec:notation} presents the notation and recalls some basic notions from monotone inclusions. 
Then, \Cref{s:algo} introduces the inertial forward-Douglas-Rachford splitting method and proves the weak convergence. 
\Cref{s:convex-optimization} describes the application of the proposed method to three-composite convex minimization template, and \Cref{sec:restart} introduces the heuristic adaptive restart scheme. 
Finally, \Cref{s:numer} presents the numerical experiments.

\section{Notation \& Preliminaries}
\label{sec:notation}

This section recalls the basic notions from the monotone inclusion theory, and presents the key lemmas to be used in the sequel.  

Let $\HHH$ be a real Hilbert space with the inner product $\scal{\cdot}{\cdot}$ and the associated norm $\|\cdot\|$. 
For definitions given below, suppose that  $\AAA \colon \HHH \to 2^{\HHH}$ is a set-valued operator, and 
$f \colon \RR^d \to \left[-\infty,+\infty\right]$ is a proper, lower semicontinuous convex function. 

\textbf{Weak and strong convergence.} 
The symbols $\weakly$ and $\to$ denote the weak and strong convergence respectively. Let us recall that $x_n\weakly x$ if $\scal{x_n-x}{y} \to 0$ for all $y\in\HH$. 

\textbf{Subdifferential.} 
$\partial f$ denotes the subdifferential of $f$,
\begin{equation*}
 \partial f(\xx) \!=\! \menge{\uu\!\in\!\RR^d}{\!f(\yy)\! -\!f(\xx)\!\geq\!\scal{\yy\!-\!\xx\!}{\!\uu}, \forall \yy\!\in\!\RR^d}.
 \end{equation*}

\textbf{Proximal operator.} 
The proximal operator of $f$ is defined as 
\begin{equation*}
\prox_f (\xx) = \arg\min_{\zz \in \RR^d} \left\{ f(\zz) + \frac{1}{2} \| \zz - \xx \|^2 \right\}.
\end{equation*}

\textbf{Domain, graph, zeros and range.} 
Domain, graph, range and the set of zeros of $\AAA$ are defined as follows:
\begin{align*}
\dom \AAA & =\menge{\xx \in \HHH}{\AAA \xx \neq\emp} \\
\gra \AAA & =\menge{(\xx,\uu)\in\HHH\times\HHH}{u\in \AAA \xx} \\
\ran \AAA & =\menge{\uu\in\HHH}{(\exists \xx\in\HHH)\; u\in \AAA \xx} \\
\zer \AAA &=\menge{\xx\in\HHH}{0\in \AAA \xx}
\end{align*}

\textbf{Inverse.} We denote the inverse of $\AAA$ by $\AAA^{-1}$:
\begin{equation*}
 \xx \in \AAA^{-1}\uu \iff \uu \in \AAA \xx
\end{equation*}

\textbf{Resolvent.} 
The resolvent of $\AAA$ is defined as
\begin{equation}
\label{eq:res}
J_{\AAA}=(\Id+\AAA)^{-1}
\end{equation}
where $\Id$ is the identity operator of $\HHH$. 
When $\AAA = \partial f$, 
$J_{\AAA} = \prox_{f}$.

\textbf{Monotone operator.} 
$\AAA$ is said to be a monotone operator if
\begin{equation*}
\scal{\xx-\yy}{\uu-\vv} \geq 0 \quad\quad
\begin{aligned}
\forall (\xx,\uu) &\in\gra \AAA, \\
\forall (\yy,\vv) &\in\gra \AAA.
\end{aligned}
\end{equation*}

\textbf{Maximally monotone operator.} 
$\AAA$ is maximally monotone if $\AAA$ is monotone and if there exists no monotone operator $\widetilde{\AAA}\colon\HHH\to\HHH$
such that $\gra \AAA\subset \gra \widetilde{\AAA}\neq \gra \AAA$. 

\textbf{Uniformly monotone operator.} 
$\AAA$ is uniformly monotone at $\yy$ if there exists a function $\phi\colon \left[0,+\infty\right[\to \left[0,+\infty\right]$
vanishing only at $0$ such that 
\begin{equation*}
\scal{\xx-\yy}{\uu-\vv} \geq \phi(\|\yy-\xx\|) \quad\quad
\begin{aligned}
\forall \vv &\in \AAA\yy, \\
\forall (\xx,\uu) &\in\gra \AAA.
\end{aligned}
\end{equation*}

\textbf{Fixed points.} 
We denote the set fixed points of an operator $T\colon\HHH\to\HHH$ as
\begin{equation*}
\Fix(T) = \menge{\xx\in\HHH}{\xx=T\xx}.
\end{equation*}

\textbf{Non-expansive operator.} 
An operator $T\colon\HHH\to\HHH$ is non-expansive if
\begin{equation*}
\|T\xx-T\yy\| \leq \|\xx-\yy\| \quad\quad
\forall \xx \in\HHH, ~
\forall \yy \in\HHH.
\end{equation*}

\textbf{Averaged operator.} Let $\alpha \in ]0,1[$. 
An operator $T\colon\HHH\to\HHH$ is $\alpha$-averaged if
$T = (1-\alpha)\Id + \alpha R$
for some non-expansive operator $R\colon\HHH\to\HHH$.

\textbf{Demiregular operator} {\cite[Definition 2.3]{Attouch2010}}\textbf{.} An operator $\AAA$ is demiregular at $\yy\in\dom(\AAA)$ if,
for every sequence $(\yy_n,\vv_n)_{n\in\NN} $ in $\gra \AAA$ and every $\vv\in \AAA\yy$ such that $\yy_n\weakly \yy$ and $\vv_n\to \vv$, we have $\yy_n\to \yy$.

 Next, we present 3 key lemmas to be used in the proof of the main convergence theorem.

\begin{lemma}{\em(See \cite[Lemma 2.2]{Davis2017threeop})}\label{l:0}
 Let $\gamma$ be a strictly positive number.  
 Define $T$ as follows:
 \begin{equation}
 \label{eqn:operator-T}
 T = \Id-J_{\gamma \BB} + J_{\gamma \AAA}\circ(2J_{\gamma \BB}-
Id -\gamma \QQ\circ J_{\gamma \BB}).
 \end{equation}
 Then, $\Fix(T)\not=\emp$ whenever $\zer(\AAA+\BB +\QQ)\not=\emp$. 
 Furthermore, $\zer(\AAA+\BB+\QQ) = J_{\gamma \BB}(\Fix(T))$.
\end{lemma}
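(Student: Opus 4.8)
The plan is to characterize $\Fix(T)$ directly through the defining inclusions of the two resolvents and then read off both assertions simultaneously. The crucial tool is the elementary resolvent identity: since $\AAA$ and $\BB$ are maximally monotone, $J_{\gamma\AAA}$ and $J_{\gamma\BB}$ are everywhere-defined single-valued operators (by Minty's theorem), and for all $\xx,\yy\in\HHH$ one has $\yy = J_{\gamma\AAA}\xx \iff \gamma^{-1}(\xx-\yy)\in\AAA\yy$, with the analogous equivalence for $\BB$. This lets me convert resolvent equations into membership statements that I can add together.

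First I would introduce intermediate variables. Given $\zz\in\HHH$, set $\xx_{\BB} = J_{\gamma\BB}\zz$ and $\xx_{\AAA} = J_{\gamma\AAA}(2\xx_{\BB}-\zz-\gamma\QQ\xx_{\BB})$, so that by definition $T\zz = \zz - \xx_\BB + \xx_\AAA$. Hence $\zz\in\Fix(T)$ is equivalent to $\xx_\AAA = \xx_\BB$; denote this common value by $\xx$. I then translate the two resolvent definitions at such a fixed point into inclusions. From $\xx = J_{\gamma\BB}\zz$ I get $\gamma^{-1}(\zz-\xx)\in\BB\xx$, and writing $\ww = 2\xx-\zz-\gamma\QQ\xx$ for the argument of $J_{\gamma\AAA}$, the identity $\xx = J_{\gamma\AAA}\ww$ gives $\gamma^{-1}(\ww-\xx) = \gamma^{-1}(\xx-\zz)-\QQ\xx\in\AAA\xx$. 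Adding these two inclusions cancels the $\zz$-dependent terms $\pm\gamma^{-1}(\zz-\xx)$ and leaves $-\QQ\xx\in\AAA\xx+\BB\xx$, i.e. $\mathbf{0}\in\AAA\xx+\BB\xx+\QQ\xx$. This proves $J_{\gamma\BB}(\Fix(T))\subseteq\zer(\AAA+\BB+\QQ)$.

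For the reverse inclusion I start from $\xx^\star\in\zer(\AAA+\BB+\QQ)$ and choose $a\in\AAA\xx^\star$ and $b\in\BB\xx^\star$ with $a+b+\QQ\xx^\star=\mathbf{0}$. Defining $\zz = \xx^\star+\gamma b$, the relation $\gamma^{-1}(\zz-\xx^\star)=b\in\BB\xx^\star$ yields $\xx^\star = J_{\gamma\BB}\zz$, and a short computation using $-b-\QQ\xx^\star = a$ gives $2\xx^\star-\zz-\gamma\QQ\xx^\star = \xx^\star+\gamma a$, so that $\gamma^{-1}((\xx^\star+\gamma a)-\xx^\star)=a\in\AAA\xx^\star$ forces $\xx^\star = J_{\gamma\AAA}(2\xx^\star-\zz-\gamma\QQ\xx^\star)$. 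Thus $\xx_\AAA = \xx_\BB = \xx^\star$, so $\zz\in\Fix(T)$ and $\xx^\star = J_{\gamma\BB}\zz\in J_{\gamma\BB}(\Fix(T))$. Combining the two inclusions establishes the claimed equality, and in particular exhibits, for each zero $\xx^\star$, an explicit fixed point $\zz = \xx^\star + \gamma b$, which shows $\Fix(T)\neq\emp$ whenever $\zer(\AAA+\BB+\QQ)\neq\emp$.

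As for obstacles, the argument is essentially bookkeeping once the resolvent characterization is in place, so I do not anticipate a genuine difficulty. The two points requiring care are: the well-definedness (single-valuedness and full domain) of $J_{\gamma\AAA}$ and $J_{\gamma\BB}$, which underpins the construction of the intermediate variables and of the preimage $\zz$ and rests on maximal monotonicity; and, in the reverse direction, correctly selecting the decomposition $a+b+\QQ\xx^\star=\mathbf{0}$ and building $\zz$ from the $\BB$-part $b$ rather than the $\AAA$-part $a$, since it is $J_{\gamma\BB}$ that must recover $\xx^\star$.
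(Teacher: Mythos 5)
Your proof is correct. The paper offers no proof of its own for this lemma---it simply defers to \cite[Lemma~2.2]{Davis2017threeop}---and your argument (characterizing $\Fix(T)$ via $\xx_\AAA=\xx_\BB$, translating both resolvent equations into inclusions, adding them, and constructing the explicit fixed point $\zz=\xx^\star+\gamma b$ for the converse) is essentially the same resolvent-calculus computation used in that reference, so it matches the intended proof.
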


\begin{lemma}{\em(See \cite[Lemma 2.3]{Alvarez2004ihp})} \label{l:alva}
Let $(s_n)_{n\in\NN}$ and $(\delta_n)_{n\in\NN}$ be a nonnegative sequence such that 
$\sum_{n\in\NN}\delta_n <+\infty$ and $s_{n+1} \leq s_n +\alpha_n(s_n-s_{n-1}) +\delta_n$, where 
$(\alpha_n)_{n\in\NN}\in \left[0,\alpha\right]^{\NN}$, for some $\alpha\in \left]0,1\right[$. Then the followings hold:
\begin{enumerate}
\item $\sum_{n=1}^{\infty} \max\{s_n-s_{n-1},0\} <+\infty$.
\item There exists $s^*\in \left[0,+\infty \right[$ such that $s_n\to s^*$.
\end{enumerate}
\end{lemma}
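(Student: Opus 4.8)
The plan is to reduce both claims to control of the nonnegative quantity $t_n := \max\{s_n - s_{n-1}, 0\}$, which is exactly the summand in statement (i). The first step is to observe that for each $n$ one has $\alpha_n(s_n - s_{n-1}) \leq \alpha\, t_n$: indeed, if $s_n \geq s_{n-1}$ this follows from $0 \leq \alpha_n \leq \alpha$ together with $t_n = s_n - s_{n-1}$, while if $s_n < s_{n-1}$ the left-hand side is nonpositive and $t_n = 0$. Substituting this into the hypothesis $s_{n+1} \leq s_n + \alpha_n(s_n - s_{n-1}) + \delta_n$ yields $s_{n+1} - s_n \leq \alpha\, t_n + \delta_n$, and since the right-hand side is nonnegative, taking positive parts (an order-preserving operation here) gives the one-step recursion
\[
t_{n+1} \leq \alpha\, t_n + \delta_n.
\]

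For statement (i) I would sum this inequality over $n = 1, \dots, N$. Writing $S_N = \sum_{n=1}^{N+1} t_n$ and using $\sum_{n=1}^{N} t_n \leq S_N$, the telescoped bound becomes $S_N - t_1 \leq \alpha\, S_N + \sum_{n\in\NN}\delta_n$, whence $(1-\alpha)\, S_N \leq t_1 + \sum_{n\in\NN}\delta_n$. Because $\alpha < 1$ and $\sum_{n\in\NN}\delta_n < +\infty$ by hypothesis, the partial sums $S_N$ are bounded uniformly in $N$; as $(t_n)$ is nonnegative, the series $\sum_{n=1}^{\infty} t_n$ therefore converges, which is precisely (i).

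For statement (ii) I would introduce the auxiliary sequence $w_n := s_n - \sum_{k=1}^{n} t_k$. Since $s_{n+1} - s_n \leq (s_{n+1}-s_n)_+ = t_{n+1}$ by the very definition of the positive part, the increments satisfy $w_{n+1} - w_n = (s_{n+1}-s_n) - t_{n+1} \leq 0$, so $(w_n)_{n\in\NN}$ is non-increasing. It is also bounded below, because $w_n \geq -\sum_{k=1}^{\infty} t_k > -\infty$, using $s_n \geq 0$ and the summability established in (i). A non-increasing sequence bounded below converges, say $w_n \to w^\ast$; adding back the convergent series gives $s_n = w_n + \sum_{k=1}^{n} t_k \to w^\ast + \sum_{k=1}^{\infty} t_k =: s^\ast$, and $s^\ast \geq 0$ follows from $s_n \geq 0$, so $s^\ast \in [0,+\infty[$.

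The only genuinely delicate point is the sign analysis producing $\alpha_n(s_n - s_{n-1}) \leq \alpha\, t_n$, which converts the signed inertial term into the clean contraction $t_{n+1} \leq \alpha\, t_n + \delta_n$; this is what makes the summation close. Once that inequality is in hand, the remainder is a routine geometric-type summation for (i) and a standard monotone-convergence argument via $w_n$ for (ii), and I expect no further obstacle beyond keeping the index bookkeeping in the telescoping sum consistent.
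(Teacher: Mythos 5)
Your proof is correct: the positive-part recursion $t_{n+1}\leq \alpha t_n+\delta_n$, the geometric summation for (i), and the auxiliary nonincreasing sequence $w_n=s_n-\sum_{k=1}^{n}t_k$ for (ii) are exactly the argument of the source the paper cites (\cite[Lemma 2.3]{Alvarez2004ihp}), the paper itself offering no independent proof. No gaps; this is the standard and essentially the same approach.
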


\begin{lemma}\label{lem:01}
Let $\mathcal{M}$ be a non-empty closed affine subset of $\HH$, 
and $T\colon\mathcal{M}\to \mathcal{M}$ be an $\alpha$-averaged operator for some $\alpha\in \left]0,1\right[$ such that $\Fix(T)\not=\emp$. 
Consider the following iterative scheme:
\begin{equation}
\label{e:proof21}
(\forall n\in\NN_{+})\quad
\begin{array}{l}
\left\lfloor
\begin{array}{l}
w_n = \overline{x}_n +\tau_n(\overline{x}_n-\overline{x}_{n-1})\\
\overline{x}_{n+1} = w_n+ \lambda_n\big( Tw_n -w_n\big).
\end{array}
\right.\\[2mm]
\end{array}
\end{equation}
Let $\overline{x}_0, \overline{x}_1 \in \mathcal{M}$, let $(\tau,\varepsilon) \in \left[0,1\right[^2$, and let $(\tau_n)_{n\in \NN_+}$ be a nondecreasing sequence in $\left[0,\tau\right]^{\NN_+}$ with $\tau_1= 0$. 
Let $(\lambda_n)_{n\in\NN_+}$ be a strictly positive sequence such that $\lambda_n\geq \varepsilon$ for all $n$.
Let $\delta > 0$ and $\sigma > 0$ be such that 
\begin{equation*}
\delta > \tfrac{\tau^2(1+\tau) +\tau\sigma}{1-\tau^2}\quad \text{and}\quad 
\varepsilon \leq \lambda_n\leq \tfrac{\delta - \tau(\tau +\tau^2 +\tau\delta +\sigma)}{\alpha\delta(1+\tau +\tau^2 +\tau\delta +\sigma )}.
\end{equation*}
Then the followings hold:
\begin{enumerate}
\item $\sum_{n\in\NN}\|\overline{x}_{n+1}- \overline{x}_n\|^2 <+\infty$.
\item $(\overline{x}_n)_{n\in\NN}$ converges weakly to a point in $\Fix(T)$. 
\end{enumerate}
\end{lemma}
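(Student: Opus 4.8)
The plan is to run a quasi-Fej\'er/Opial argument relative to $\Fix(T)$, absorbing the inertia into a summable perturbation. Throughout I fix $z\in\Fix(T)$ and write $\varphi_n=\|\overline{x}_n-z\|^2$ and $d_n=\|\overline{x}_n-\overline{x}_{n-1}\|^2$. First I would check that the whole iteration stays in $\mathcal M$: since $w_n=(1+\tau_n)\overline{x}_n-\tau_n\overline{x}_{n-1}$ and $\overline{x}_{n+1}=(1-\lambda_n)w_n+\lambda_n Tw_n$ are affine combinations of points of $\mathcal M$, and $T$ maps $\mathcal M$ into $\mathcal M$, every iterate lies in $\mathcal M$; this is what allows $T$ and its fixed points to be used freely.

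The first analytic ingredient is the defining inequality of an $\alpha$-averaged operator, $\|Tw_n-z\|^2\le\|w_n-z\|^2-\tfrac{1-\alpha}{\alpha}\|w_n-Tw_n\|^2$. Combining it with $\overline{x}_{n+1}-w_n=\lambda_n(Tw_n-w_n)$ and expanding $\|\overline{x}_{n+1}-z\|^2$ yields
\[
\varphi_{n+1}\le\|w_n-z\|^2-\kappa_n\,\|\overline{x}_{n+1}-w_n\|^2,\qquad \kappa_n=\tfrac{1-\alpha\lambda_n}{\alpha\lambda_n}>0,
\]
where $\kappa_n>0$ because the prescribed upper bound on $\lambda_n$ forces $\lambda_n<1/\alpha$. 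The second ingredient is the inertial identity $\|w_n-z\|^2=(1+\tau_n)\varphi_n-\tau_n\varphi_{n-1}+\tau_n(1+\tau_n)d_n$, obtained from $w_n-z=(1+\tau_n)(\overline{x}_n-z)-\tau_n(\overline{x}_{n-1}-z)$. Substituting produces the master recursion $\varphi_{n+1}\le(1+\tau_n)\varphi_n-\tau_n\varphi_{n-1}+\tau_n(1+\tau_n)d_n-\kappa_n\|\overline{x}_{n+1}-w_n\|^2$.

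For the first conclusion (summability) I would convert $\|\overline{x}_{n+1}-w_n\|^2$ into $d_{n+1}$ via $\overline{x}_{n+1}-w_n=(\overline{x}_{n+1}-\overline{x}_n)-\tau_n(\overline{x}_n-\overline{x}_{n-1})$ together with a Young inequality whose free weight is tuned to $\sigma$, and then study the energy $H_n=\varphi_n-\tau_n\varphi_{n-1}+\beta_n d_n$ for a coefficient $\beta_n$ tied to $\delta$. The monotonicity $\tau_{n+1}\ge\tau_n$ annihilates the $\varphi_n$ cross-term, and the role of the two hypotheses on $\delta$ and on $\lambda_n$ is precisely to make the resulting coefficient of $d_{n+1}$ nonpositive while keeping the coefficient of $d_n$ bounded above by some $-c<0$, so that $(H_n)$ is nonincreasing. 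Since $\tau_1=0$ gives $H_1=\varphi_1+\beta_1 d_1\ge0$, the induced recursion $\varphi_n\le\tau\varphi_{n-1}+H_1$ shows $(\varphi_n)$ is bounded, whence $H_n\ge-\tau\varphi_{n-1}$ is bounded below; a nonincreasing, bounded-below sequence converges, and telescoping $H_{n+1}\le H_n-c\,d_n$ gives $\sum_n d_n<+\infty$, together with the by-product $\sum_n\|\overline{x}_{n+1}-w_n\|^2<+\infty$. I expect this bookkeeping --- choosing $\beta_n$ and verifying that the stated inequalities on $\delta,\sigma,\lambda_n$ render both coefficients of the correct sign --- to be the main obstacle; the rest is routine.

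For the second conclusion I would first discard $-\kappa_n\|\overline{x}_{n+1}-w_n\|^2\le0$ in the master recursion, obtaining $\varphi_{n+1}\le\varphi_n+\tau_n(\varphi_n-\varphi_{n-1})+\delta_n$ with $\delta_n=\tau_n(1+\tau_n)d_n$ summable by the first conclusion; \Cref{l:alva} (with $s_n=\varphi_n$, $\alpha_n=\tau_n\le\tau$) then shows that $\lim_n\|\overline{x}_n-z\|$ exists for every $z\in\Fix(T)$. The first conclusion also yields $d_n\to0$, hence $w_n-\overline{x}_n=\tau_n(\overline{x}_n-\overline{x}_{n-1})\to0$ and $\overline{x}_{n+1}-w_n\to0$, so $Tw_n-w_n=\tfrac1{\lambda_n}(\overline{x}_{n+1}-w_n)\to0$ since $\lambda_n\ge\varepsilon$. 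I then apply Opial's lemma: any weak cluster point $\bar x$ of $(\overline{x}_n)$ is also a weak cluster point of $(w_n)$ and lies in $\mathcal M$ (weakly closed), and by demiclosedness of $\Id-T$ at $0$ (valid since $T$ is nonexpansive) it satisfies $\bar x\in\Fix(T)$; together with the existence of $\lim_n\|\overline{x}_n-z\|$ for all $z\in\Fix(T)$, Opial's lemma delivers $\overline{x}_n\weakly\bar x\in\Fix(T)$.
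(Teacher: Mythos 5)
Your proposal is correct, but it takes a genuinely different route from the paper's. The paper disposes of \Cref{lem:01} in two lines: it writes the $\alpha$-averaged $T$ as $T=(1-\alpha)\Id+\alpha R$ with $R=(1-\alpha^{-1})\Id+\alpha^{-1}T$ nonexpansive, sets $\mu_n=\alpha\lambda_n$, observes that \eqref{e:proof21} then becomes an inertial Krasnosel'ski\u{\i}--Mann iteration for $R$ with relaxation $\mu_n$, and invokes \cite[Theorem~5]{Bot2015iner}, whose hypotheses are exactly the stated conditions on $(\tau,\delta,\sigma,\lambda_n)$ after this substitution. You instead re-derive that theorem from scratch: the averagedness inequality plus the relaxation identity give your master recursion with $\kappa_n=\tfrac{1-\alpha\lambda_n}{\alpha\lambda_n}=\tfrac{1-\mu_n}{\mu_n}$ (so your algebra is literally the cited proof with the substitution built in), then the Young/energy bookkeeping yields $\sum_n d_n<+\infty$, and \Cref{l:alva} together with Opial's lemma and the demiclosedness of $\Id-T$ gives weak convergence to a fixed point. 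What your route buys: it is self-contained, it makes visible where each hypothesis enters ($\lambda_n<1/\alpha$ for $\kappa_n>0$, monotonicity of $\tau_n$ and $\tau_1=0$ for the energy, $\lambda_n\geq\varepsilon$ for $Tw_n-w_n\to0$), and your explicit check that the iterates remain in $\mathcal M$ and that weak cluster points lie in $\mathcal M$ actually patches a small gap the paper glosses over, since the cited theorem is stated on all of $\HH$ rather than on a closed affine subset. What the paper's route buys: brevity, and it outsources the delicate constant-chasing. That constant-chasing is the one step you defer --- verifying that the stated bounds on $\delta,\sigma,\lambda_n$ make the coefficient of $d_{n+1}$ nonpositive and that of $d_n$ at most $-c<0$ --- and it is precisely the content supplied by the citation; it does go through, with the Young weight chosen as $\rho_n=(\tau_n+\delta\alpha\lambda_n)^{-1}$ (so tied to $\delta$ rather than to $\sigma$, which instead supplies the uniform negative margin), the same quantities $\rho_n$, $\chi_{1,n}$, $\chi_{2,n}$ that reappear in the paper's proof of \Cref{thm:IFDR-monotone-inclusion}. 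So the only correction worth recording is that minor misattribution of the roles of $\delta$ and $\sigma$; otherwise your sketch is a faithful, more transparent expansion of what the paper cites as a black box.
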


\begin{proof} 
This lemma is a direct consequence of {\rm \cite[Theorem 5]{Bot2015iner}}.
Define $R = (1-\alpha^{-1})\Id +\alpha^{-1} T$, and set $\mu_n =\alpha\lambda_n$. 
Then, we can rewrite \eqref{e:proof21} as:
\begin{equation*}
(\forall n\in\NN_+)\quad
\begin{array}{l}
\left\lfloor
\begin{array}{l}
w_n = \overline{x}_n +\tau_n(\overline{x}_n-\overline{x}_{n-1})\\
\overline{x}_{n+1} = w_n+ \mu_n\big( Rw_n -w_n\big).
\end{array}
\right.\\[2mm]
\end{array}
\end{equation*}
It is easy to verify that $R$ and $(\mu)_{n\in\NN_+}$ satisfy all conditions in {\rm \cite[Theorem~5]{Bot2015iner}}. The proof directly follows from there. 
\end{proof}

\begin{remark} Suppose that $1+\tau_n$ and $-\tau_n$ are non-negative such that $\inf_{n\in\NN}(1+\tau_n) > 0$. Set $\lambda_n\equiv 1/\alpha$.
Then it is shown in \cite[Example~4.3]{Combettes2017} that $\|w_n-Rw_n\| = \alpha^{-1} \|w_n-Tw_n\| \to 0$. Moreover, there exists $w\in \Fix(T)$ such that 
$w_n\weakly w$ and $\overline{x}_n\weakly w$.
\end{remark}

\section{Algorithm \& Convergence}
\label{s:algo}

We describe the inertial forward-Douglas-Rachford splitting method (IFDR) for solving Problem \ref{prob:monotone-inclusion} in \Cref{alg:I3OSM}, 
and we prove the weak convergence of the proposed method in \Cref{thm:IFDR-monotone-inclusion}.

\begin{algorithm}[H]
   \caption{IFDR for Problem~\ref{prob:monotone-inclusion}}
   \label{alg:I3OSM}
\begin{algorithmic}
\vspace{1mm}
   \STATE {\bfseries Input:} initial points $\overline{\xx}_0=\overline{\xx}_1$ in $\HHH$, step size $\gamma$, two sequences of strictly positive numbers $(\tau_n)_{n\in\NN_{+}}$ and $(\lambda_n)_{n\in\NN_{+}}$.
   \STATE {\bfseries Procedure:} 
   \FOR{$n=1,2,\ldots$}
   \STATE $\ww_n = \overline{\xx}_n +\tau_n(\overline{\xx}_n-\overline{\xx}_{n-1})$
   \STATE $\xx_n = J_{\gamma\BB}\ww_n$
   \STATE $\yy_n= J_{\gamma\AAA} (2\xx_n-\ww_n - \gamma\QQ\xx_n)$
   \STATE $\overline{\xx}_{n+1} = \ww_n +\lambda_n\big(\yy_n -\xx_n\big)$
   \ENDFOR 
\vspace{1mm}
\end{algorithmic}
\end{algorithm}

\begin{theorem} 
\label{thm:IFDR-monotone-inclusion}
Suppose that the parameter $\gamma$ and the sequences $(\lambda_n)_{n\in\NN_+}$ and $(\tau_n)_{n\in\NN_+}$ satisfy the following conditions:
\begin{enumerate}[(i).]
\item $\gamma \in \left]0,2\beta\kappa\right[$
\item $\tau \geq \tau_{n+1} \geq \tau_{n} \quad$ for all $n\in\NN_+$ 
\item $\epsilon < \lambda_n\leq \frac{\delta - \tau(\tau +\tau^2 +\tau\delta +\sigma)}{\alpha\delta(1+\tau +\tau^2 +\tau\delta +\sigma )}\quad$ for all $n\in\NN_+ \quad$ where $\quad \alpha = \tfrac{2\beta}{4\beta-\gamma}$  
\end{enumerate}
for some $\quad (\tau,\epsilon,\kappa) \in \left]0, 1\right[^3\quad$ and $ \quad (\delta, \sigma) \in \left]0,+\infty\right[^2 \quad$ that satisfy $\quad   \delta > \tfrac{\tau^2(1+\tau) +\tau\sigma}{1-\tau^2}$. 

\noindent Then, there exists a point $\ww\in\HHH$ such that the followings hold:
\begin{enumerate}
\item\label{conc:i} $\sum_{n\in\NN_+}\|\overline{\xx}_{n+1}- \overline{\xx}_n\|^2 <+\infty$.
\item \label{conc:ii}
$(\overline{\xx}_n)_{n\in\NN_+}$ converges weakly to $\ww$.
\item\label{conc:iii}  $(\xx_n)_{n\in\NN_+}$ converges weakly to $\xx^\star = J_{\gamma\BB}\ww \in \boldsymbol{\mathcal{X}^\star}$.
\item \label{conc:iv} Suppose that one of the followings holds:
\begin{enumerate}[(a).]
\item \label{conc:iva}$\QQ$ is demiregular at $\xx^\star$.
\item \label{conc:ivb} $\AAA$ is uniformly monotone at $\xx^\star$.
\item\label{conc:ivc} $\BB$ is uniformly monotone at $\xx^\star$.
\end{enumerate}
Then, $\xx_n$ converges to $\xx^\star$ almost surely.
\end{enumerate}
\end{theorem}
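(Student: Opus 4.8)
The plan is to recognise \Cref{alg:I3OSM} as the inertial iteration \eqref{e:proof21} governed by the operator $T$ of \Cref{l:0}, and then to quote \Cref{lem:01}. Substituting $\xx_n=J_{\gamma\BB}\ww_n$ and $\yy_n=J_{\gamma\AAA}(2\xx_n-\ww_n-\gamma\QQ\xx_n)$ into \eqref{eqn:operator-T} gives $T\ww_n=\ww_n-\xx_n+\yy_n$, so that $T\ww_n-\ww_n=\yy_n-\xx_n$ and the last line of the algorithm reads $\overline{\xx}_{n+1}=\ww_n+\lambda_n(T\ww_n-\ww_n)$; this is exactly \eqref{e:proof21} on the affine set $\mathcal{M}=\HHH$. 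I would next record that $T$ is $\alpha$-averaged with $\alpha=\tfrac{2\beta}{4\beta-\gamma}$ for every $\gamma\in\left]0,2\beta\right[$ (the averagedness estimate for the three-operator splitting, see \cite{Davis2017threeop}); condition (i), with $\kappa<1$, enforces $\gamma<2\beta$ and keeps $\alpha$ strictly below $1$. Since $\boldsymbol{\mathcal{X}^\star}\neq\emp$, \Cref{l:0} gives $\Fix(T)\neq\emp$, while (ii)--(iii) are precisely the step-size hypotheses of \Cref{lem:01} (the requirement $\tau_1=0$ being harmless since $\overline{\xx}_0=\overline{\xx}_1$). Applying \Cref{lem:01} then delivers conclusions \ref{conc:i} and \ref{conc:ii}: $\sum_n\|\overline{\xx}_{n+1}-\overline{\xx}_n\|^2<+\infty$ and $\overline{\xx}_n\weakly\ww$ for some $\ww\in\Fix(T)$.

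For conclusion \ref{conc:iii} I would first harvest the asymptotic regularity of the scheme. Summability forces $\|\overline{\xx}_{n+1}-\overline{\xx}_n\|\to0$; since $\ww_n-\overline{\xx}_n=\tau_n(\overline{\xx}_n-\overline{\xx}_{n-1})$ with $\tau_n\le\tau<1$ this yields $\ww_n-\overline{\xx}_n\to0$, hence $\ww_n\weakly\ww$, and since $\overline{\xx}_{n+1}-\ww_n=\lambda_n(\yy_n-\xx_n)$ with $\lambda_n\ge\epsilon>0$ it yields $\yy_n-\xx_n\to0$ strongly. By \Cref{l:0}, $\xx^\star:=J_{\gamma\BB}\ww\in\boldsymbol{\mathcal{X}^\star}$. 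To show $\xx_n\weakly\xx^\star$ I would run a demiclosedness argument on the shadow sequence: writing $\uu_n:=\gamma^{-1}(\ww_n-\xx_n)\in\BB\xx_n$ and $\vv_n:=\gamma^{-1}(2\xx_n-\ww_n-\gamma\QQ\xx_n-\yy_n)\in\AAA\yy_n$, one has $\uu_n+\vv_n+\QQ\xx_n=\gamma^{-1}(\xx_n-\yy_n)\to0$; testing the monotonicity of $\AAA$ and $\BB$ and the cocoercivity of $\QQ$ against an arbitrary $(\zz,\ee)\in\gra(\AAA+\BB+\QQ)$, and letting $\xx_{n_k}\weakly\widetilde{\xx}$ along a weak cluster point (so that also $\yy_{n_k}\weakly\widetilde{\xx}$, using $\yy_n-\xx_n\to0$ and boundedness of $(\vv_n)$), passage to the limit gives $\scal{\widetilde{\xx}-\zz}{\mathbf{0}-\ee}\ge0$; maximal monotonicity of $\AAA+\BB+\QQ$ then forces $\mathbf{0}\in(\AAA+\BB+\QQ)\widetilde{\xx}$, i.e.\ every weak cluster point of $(\xx_n)$ solves the inclusion.

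Pinning the weak limit down to the single point $\xx^\star$ is the step I expect to be the main obstacle, because the shadow sequence is not produced directly by the averaged iteration and neither $J_{\gamma\BB}$ nor $\QQ$ is weakly sequentially continuous, so the weak--weak limits must be controlled by monotonicity rather than by continuity. I would resolve it following the shadow-sequence analysis of \cite{Davis2017threeop}: firm nonexpansiveness of $J_{\gamma\BB}$ gives $\|\xx_n-\xx^\star\|^2\le\scal{\xx_n-\xx^\star}{\ww_n-\ww}$, while adding the monotonicity inequalities of $\AAA$ and $\BB$ evaluated at the fixed point (where $\uu^\star:=\gamma^{-1}(\ww-\xx^\star)\in\BB\xx^\star$ and $\vv^\star\in\AAA\xx^\star$ satisfy $\uu^\star+\vv^\star+\QQ\xx^\star=\mathbf{0}$) and the cocoercivity of $\QQ$, together with $\yy_n-\xx_n\to0$, shows that the cocoercivity gap $\scal{\xx_n-\xx^\star}{\QQ\xx_n-\QQ\xx^\star}\to0$ and hence $\QQ\xx_n\to\QQ\xx^\star$ strongly. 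This strong convergence removes the weak discontinuity of $\QQ$ and, combined with the convergence of $\|\overline{\xx}_n-\ww\|$ supplied by \Cref{l:alva}, lets one upgrade the above inequality into an Opial argument that identifies the unique weak limit as $\xx^\star$.

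Finally, conclusion \ref{conc:iv} upgrades weak to strong convergence (the phrase ``almost surely'' should read ``strongly'', the scheme being deterministic). Summing the Fej\'er-type estimate underlying \Cref{lem:01} as in \Cref{l:alva} shows that the three gaps $\scal{\yy_n-\xx^\star}{\vv_n-\vv^\star}$, $\scal{\xx_n-\xx^\star}{\uu_n-\uu^\star}$ and $\scal{\xx_n-\xx^\star}{\QQ\xx_n-\QQ\xx^\star}$ all vanish. In case \ref{conc:iva}, the last gap forces $\QQ\xx_n\to\QQ\xx^\star$, and demiregularity of $\QQ$ at $\xx^\star$ combined with $\xx_n\weakly\xx^\star$ gives $\xx_n\to\xx^\star$. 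In case \ref{conc:ivb}, uniform monotonicity of $\AAA$ gives $\phi(\|\yy_n-\xx^\star\|)\le\scal{\yy_n-\xx^\star}{\vv_n-\vv^\star}\to0$, so $\yy_n\to\xx^\star$ and therefore $\xx_n\to\xx^\star$ by $\yy_n-\xx_n\to0$; case \ref{conc:ivc} is identical with $\BB$ and the $\BB$-gap in place of $\AAA$.
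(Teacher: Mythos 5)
Your reduction to \Cref{lem:01} for conclusions 1 and 2, your derivation of $\yy_n-\xx_n\to 0$, and your treatment of conclusion 4 (including the correct observation that ``almost surely'' should read ``strongly'') all match the paper's proof in substance. Your route to the strong convergence $\QQ\xx_n\to\QQ\xx^\star$ --- adding the monotonicity inequalities of $\AAA$ and $\BB$ tested at the solution to the cocoercivity of $\QQ$ --- is also valid, and is a legitimate alternative to the paper's mechanism, which instead extracts the term $-\rho_{2,n}\|\QQ\xx_n-\QQ\xx^\star\|^2$ from the refined averagedness inequality of \cite{Davis2017threeop} and feeds the resulting recursion to \Cref{l:alva}; this is exactly why condition (i) of the theorem carries the slack parameter $\kappa<1$, which your variant does not need.

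The genuine gap is in conclusion 3, at both places where you identify the weak limit. First, your demiclosedness step ends with ``$\scal{\widetilde{\xx}-\zz}{\mathbf{0}-\ee}\geq 0$ for all $(\zz,\ee)\in\gra(\AAA+\BB+\QQ)$, hence $\mathbf{0}\in(\AAA+\BB+\QQ)\widetilde{\xx}$ by maximal monotonicity of the sum.'' But $\AAA+\BB+\QQ$ is not assumed, and need not be, maximally monotone: the sum of two maximally monotone operators can fail to be maximally monotone without a constraint qualification such as $\dom\AAA\cap\operatorname{int}\dom\BB\neq\emp$, and \Cref{prob:monotone-inclusion} assumes only $\zer(\AAA+\BB+\QQ)\neq\emp$. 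Being monotonically related to $\gra(\AAA+\BB+\QQ)$ therefore does not place $(\widetilde{\xx},\mathbf{0})$ in that graph. Second, even granting that every weak cluster point is a solution, your ``Opial argument'' for uniqueness does not close: Opial's lemma requires $\lim_n\|\xx_n-\zz\|$ to exist for every $\zz$ in the target set, and no Fej\'er-type property of the shadow sequence $(\xx_n)=(J_{\gamma\BB}\ww_n)$ with respect to the solution set is available --- the Fej\'er estimates behind \Cref{lem:01} and \Cref{l:alva} concern $(\overline{\xx}_n)$ and $\Fix(T)$, while the inequality $\|\xx_n-\xx^\star\|^2\leq\scal{\xx_n-\xx^\star}{\ww_n-\ww}$ has a weak-times-weak right-hand side and yields nothing in the limit. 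The paper repairs both defects with a single tool: having $\QQ\xx_{k_n}\to\QQ\xx^\star$ strongly and $\xx_{k_n}-\yy_{k_n}\to 0$, it applies the weak--strong graph-closedness result \cite[Proposition~25.5]{Bauschke2011} to the pairs $(\xx_{k_n},\ww_{k_n}-\xx_{k_n})\in\gra\gamma\BB$ and $(\yy_{k_n},2\xx_{k_n}-\ww_{k_n}-\gamma\QQ\xx_{k_n}-\yy_{k_n})\in\gra\gamma\AAA$, concluding $\ww-\widetilde{\xx}\in\gamma\BB\widetilde{\xx}$, i.e.\ $\widetilde{\xx}=J_{\gamma\BB}\ww=\xx^\star$. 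This identifies every weak cluster point as the \emph{same} point $\xx^\star$ (so it is a solution and unique in one stroke), and \cite[Lemma~2.38]{Bauschke2011} then gives $\xx_n\weakly\xx^\star$. Replacing your maximality step and your Opial step by this application of Proposition 25.5 --- which the strong convergence of $\QQ\xx_n$ you already established is precisely what enables --- completes the proof.
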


\begin{remark}
Similar conditions relating the step-sizes $\gamma$ and $\lambda_n$ to the inertia parameter $\tau_n$ are considered in the inertial Douglas-Rachford splitting \cite{Bot2015iner}.
\end{remark}

\begin{remark} 
When $g = 0$
, IFDR reduces to the standard inertial proximal point method (\textit{cf.}, \cite{attouch2016,attouch2017,Beck2009fista,Chambolle2015,Bonettini2017,Aujol2015}). 
 And for the choice of $\tau_n \equiv 0$, IFDR reduces to the three operator splitting method in \cite{Davis2017threeop}. 
\end{remark}

\begin{remark} \label{rem:negativetau}
If $\QQ = \Id$, 
we can chose $\tau_n$ such that $1+\tau_n$ and $\-\tau_n$ such that $\inf_{n\in\NN}(1+\tau_n) > 0$ and $\lambda_n= 1$. Then above results remains valid for any positive $\gamma$.
\end{remark}

\subsection{Proof of \Cref{thm:IFDR-monotone-inclusion}}
Let $T$ be defined as \eqref{eqn:operator-T}, then the iterative updates of IFDR can be written as \eqref{e:proof21}.
It follows from \cite[Proposition 3.1]{Davis2017threeop} that 
$T$ is an $\alpha$-averaged operator with $\alpha= 2\beta(4\beta-\gamma)^{-1}$.
The conditions of \Cref{lem:01} also satisfy  the conditions listed in \Cref{thm:IFDR-monotone-inclusion}. 

\ref{conc:i}$\&$ \ref{conc:ii}: These  follow from Lemma \ref{lem:01} with $\ww \in\Fix(T)$.

\ref{conc:iii}: Since $\ww\in\Fix(T)$, we have $\xx= J_{\gamma \BB}\ww \in \boldsymbol{\mathcal{X}^\star}$
and $\ww = T\ww$. 
It follows from \cite[Corollary 2.14]{Bauschke2011} that 
{
\medmuskip=1mu
\thinmuskip=1mu
\thickmuskip=1mu
\nulldelimiterspace=2pt
\scriptspace=1pt
\begin{alignat}{2}
\|\overline{\xx}_{n+1}-\ww\|^2 &=   \|(1\!-\!\lambda_n)(\ww_n-\ww) +\lambda_n( T\ww_n-T\ww) \|^2\notag\\
& = (1-\lambda_n) \|\ww_n-\ww \|^2 +\lambda_n \|T\ww_n-T\ww \|^2 \notag \\
& \quad\quad\quad\quad\quad\quad - \lambda_n(1-\lambda_n) \|T\ww_n- \ww_n \|^2. \notag
\end{alignat}
}
As it is shown in \cite[Eq. (2.3)]{Davis2017threeop} that 
\begin{equation*}
\begin{aligned}
\|T\ww_n-T\ww \|^2  \leq & \|\ww_n-\ww \|^2  - \tfrac{1-\alpha}{\alpha} \|T\ww_n-\ww_n \|^2 \\
& \quad\quad -\gamma(2\beta - \tfrac{\gamma}{\kappa})\|\QQ\xx_n- \QQ\xx \|^2.
\end{aligned}
\end{equation*}
Therefore, 
\begin{equation}
\label{e:bais}
\begin{aligned}
\|\overline{\xx}_{n+1}-\ww\|^2\leq & \|\ww_n-\ww \|^2  - \rho_{1,n}\|R\ww_n-\ww_n \|^2 \\ 
& \quad\quad  -\rho_{2,n}  \|\QQ\xx_n- \QQ\xx \|^2,
\end{aligned}
\end{equation}
where we set
\begin{equation*}
\rho_{1,n} = \lambda_n(1-\lambda_n)+\lambda_n\tfrac{1-\alpha}{\alpha} ~~
\text{and}~~ \rho_{2,n} = \lambda_n\gamma(2\beta-\tfrac{\gamma}{\kappa}).
\end{equation*}
Let us estimate two first terms in the right hand side of \eqref{e:bais}. Using \cite[Corollary 2.14]{Bauschke2011}, we have
\begin{equation} 
\label{e:es1}
\begin{aligned}
\|\ww_n-\overline{\xx} \|^2 & = (1+\tau_n) \|\overline{\xx}_n-\ww \|^2 - \tau_n \|\overline{\xx}_{n-1}-\ww \|^2  \\ 
& \quad\quad +\tau_n(1+\tau_n) \|\overline{\xx}_n-\overline{\xx}_{n-1} \|^2,
\end{aligned}
\end{equation}
and  upon setting $\mu_n = \alpha\lambda_n$ and $\rho_n = (\tau_n+\delta\mu_n)^{-1}$, 
\begin{alignat}{2}
\label{e:es2}
& \|T\ww_n-\ww_n \|^2 = \tfrac{1}{\lambda_{n}^2} \|\overline{\xx}_{n+1}-\ww_n \|^2\notag\\
& \quad\quad =  \tfrac{1}{\lambda_{n}^2} \big(\|\overline{\xx}_{n+1}-\overline{\xx}_n +\tau_n (\overline{\xx}_{n-1}-\overline{\xx}_n) \|^2 \big)\notag\\
&\quad\quad= \tfrac{1}{\lambda_{n}^2} \|\overline{\xx}_{n+1}-\overline{\xx}_n\|^2 + \tfrac{\tau^{2}_n}{\lambda_{n}^2} \|\overline{\xx}_{n-1}-\overline{\xx}_n\|^2 \notag \\ 
& \quad\quad\quad +2 \tfrac{\tau_n}{\lambda_{n}^2} \scal{\overline{\xx}_{n+1}-\overline{\xx}_n}{\overline{\xx}_{n-1}-\overline{\xx}_n} \\
&\quad\quad \geq \tfrac{1}{\lambda_{n}^2} \|\overline{\xx}_{n+1}-\overline{\xx}_n\|^2 +\tfrac{\tau^{2}_n}{\lambda_{n}^2} \|\overline{\xx}_{n-1}-\overline{\xx}_n\|^2 
\notag \\
&\quad\quad\quad - \tfrac{\tau_n\rho_n}{\lambda_{n}^2}\|\overline{\xx}_{n+1}-\overline{\xx}_n\|^2 -\tfrac{\tau_n}{\lambda_{n}^2\rho_n}\| \overline{\xx}_{n-1}-\overline{\xx}_n \|^2. \notag
\end{alignat}
Set 
\begin{equation*}
\begin{cases}
 \chi_{1,n} =\rho_{1,n}\lambda_{n}^{-2}(\alpha_n\rho_n- 1)\\
\chi_{2,n} = \tau_n(1+\tau_n)+\rho_{1,n}\lambda_{n}^{-2}\rho_{n}^{-1}\tau_n(1-\rho_n\tau_n)
\end{cases}
\end{equation*}
Then, inserting \eqref{e:es1} and \eqref{e:es2} into \eqref{e:bais}, we get
{
{
\medmuskip=1mu
\thinmuskip=1mu
\thickmuskip=1mu
\nulldelimiterspace=2pt
\scriptspace=1pt
\begin{alignat}{2}
\|\overline{\xx}_{n+1}-\ww\|^2& \leq  (1+\tau_n) \|\overline{\xx}_n-\ww\|^2 - \tau_n \|\overline{\xx}_{n-1}-\ww \|^2 \notag\\ 
&~~ - \rho_{2,n}\|\QQ\xx_n\!-\!\QQ\xx \|^2 +\chi_{1,n} \|\overline{\xx}_{n+1}\!-\!\overline{\xx}_n\|^2 \notag\\ 
&~~  +\chi_{2,n} \| \overline{\xx}_{n-1}\!-\!\overline{\xx}_n \|^2. \notag
\end{alignat}
}
Simple calculations show that 
\begin{equation*}
\begin{cases}
 \chi_{1,n} =  \frac{1-\mu_n}{\mu_n}(\alpha_n\rho_n-1) \\
\chi_{2,n} = \tau_n(1+\tau_n)+\frac{1-\mu_n}{\mu_n\rho_n}(1-\alpha_n\rho_n),
\end{cases}
\end{equation*}
and hence under the conditions on $\gamma$ and $\lambda_n$, the two sequences $(\chi_{1,n})_{n\in\NN_+}$ and $(\chi_{2,n})_{n\in\NN_+}$ are uniformly bounded. 
In view of the result \ref{conc:i},
$(\chi_{1,n} \|\overline{\xx}_{n+1}-\overline{\xx}_n\|^2  +\chi_{2,n} \| \overline{\xx}_{n-1}-\overline{\xx}_n \|^2)_{n\in\NN_+}$ is summable. By Lemma \ref{l:alva},  we have 
\begin{equation}
\label{e:cs1}
\begin{cases}
\text{ $(\|\overline{\xx}_n-\ww \|)_{n\in\NN}$ converges}\\
\sum_{n\geq 1} \max\{\|\overline{\xx}_n-\ww\| -\|\overline{\xx}_{n-1}-\ww\|,  0\} <+\infty,
\end{cases}
\end{equation}
hence, it follows that  
\begin{equation}
\sum_{n\in\NN_+}  \rho_{2,n}\|\QQ\xx_n-\QQ\xx \|^2  <+\infty.
\end{equation}
Since $(\rho_{2,n})_{n\in\NN_+}$ is bounded away from zero, we have $\QQ\xx_n\to \QQ\xx$. Moreover, it follows from \eqref{e:cs1} that 
$(\overline{\xx}_n)_{n\in\NN_+}$ is bounded, that, together with the boundedness of $(\tau_n)_{n\in\NN_+}$, implies that $(\ww_n)_{n\in\NN_+}$ is bounded. 

Since $J_{\gamma \BB}$ 
is non-expansive, it follows that $(\xx_n)_{n\in\NN_+}$ is bounded. Now, let $\xx^\star$ be a weak cluster point of $(\xx_n)_{n\in\NN_+}$, i.e., there exists a subsequence $(\xx_{k_n})_{n\in\NN_+}$ of $(\xx_n)_{n\in\NN_+}$ such that $\xx_{k_n}\weakly \xx^\star$. Since $\QQ$ is maximally monotone and $\QQ\xx_{k_n}\to\QQ\xx^\star$, it follows from 
\cite[Proposition 20.33(ii)]{Bauschke2011} that $\QQ\xx^\star = \QQ\overline{\xx}$ and hence $\QQ\xx_{k_n}\to \QQ\xx$.
Note that 
\begin{equation*}
\|T\ww_n-\ww_n\|^2 \leq \tfrac{2}{\lambda_{n}^2}(\|\overline{\xx}_{n+1}-\overline{\xx}_n \|^2 + \|\overline{\xx}_{n}-\overline{\xx}_{n-1} \|^2) \to 0.
\end{equation*}
Therefore, by setting $\yy_n = J_{\gamma \AAA}(2\xx_n-\ww_n-\gamma \QQ\xx_n)$, we have $\xx_n-\yy_n\to 0$ and hence $\yy_n\weakly \xx^\star$. To 
sum up, we have
{
\medmuskip=1mu
\thinmuskip=1mu
\thickmuskip=1mu
\nulldelimiterspace=2pt
\scriptspace=1pt
\begin{equation}
\label{e:gr1}
\begin{cases}
 \ww_{k_n}-\xx_{k_n} \in \gamma \BB\xx_{k_n}; ~  \ww_{k_n}-\xx_{k_n} \weakly \ww -\xx^\star; ~ \xx_{k_n}\weakly \xx^\star\\
 2\xx_{k_n}-\ww_{k_n}-\gamma \QQ\xx_{k_n} -\yy_{k_n} \in \gamma \AAA \yy_{k_n}\\
  2\xx_{k_n}-\ww_{k_n}-\gamma \QQ\xx_{k_n} -\yy_{k_n}\weakly \xx^\star-\ww -\gamma \QQ\xx ; ~ \yy_{k_n}\weakly \xx^\star\\
  \gamma\QQ\xx_{k_n} \in \gamma\QQ\xx_{k_n} \to \gamma\QQ \xx^\star\\
  \xx_{k_n}-\yy_{k_n}\to 0.
 \end{cases}
\end{equation}
}
Therefore, by \cite[Proposition 25.5]{Bauschke2011}, we have 
\begin{equation}
\label{e:gr2} 
\ww -\xx^\star\in\gamma \BB\xx^\star\; \quad\text{and}\quad\; \xx^\star-\ww -\gamma \QQ\xx\in \gamma\AAA\xx^\star,
\end{equation}
which implies that $\xx^\star = J_{\gamma \BB}\ww$ and it is the unique cluster point of $(\xx_n)_{n\in\NN_+}$. Now, by \cite[Lemma 2.38]{Bauschke2011}, we obtain
$\xx_n\weakly J_{\gamma \BB}\ww$.
  
  \ref{conc:iva}:  Since $\xx_n\!\weakly\!\xx^\star$ and $\QQ\xx_n\!\to\!\QQ\xx^\star$, and $\QQ$ is demiregular at $\xx$, by definition, it follows that $\xx_n\to\xx^\star$.  
  
    \ref{conc:ivb}: In view of \eqref{e:gr1} and \eqref{e:gr2}, we have 
    \begin{equation*}
    \begin{array}{c}
   2\xx_{n}-\ww_{n}-\gamma \QQ\xx_{n} -\yy_{n} \in \gamma \AAA \yy_{n} \\ 
   \text{and}\quad \xx^\star-\ww -\gamma \QQ\xx^\star\in \gamma\AAA\xx^\star.
	\end{array}
    \end{equation*}
    Sicne $\AAA$ is uniformly monotone at $\xx$, there exists an increasing function $\phi\colon \left[0,+\infty\right[\to \left[0,+\infty\right]$ vanishing only at $0$ 
    such that 
    \begin{alignat}{2}
    \label{e:pr3}
   &\gamma \phi(\|\yy_n-\xx^\star \|)  \notag \\
    &~~ \leq \scal{ 2\xx_{n}-\ww_{n}- \QQ\xx_{n} -\yy_{n} -\xx^\star+\ww +\QQ\xx }{\yy_n-\xx^\star}\notag\\
    &~~= \scal{\yy_n-\xx^\star }{\QQ\xx_{n} -\QQ\xx^\star} \notag \\
    &~~\quad\quad+\scal{\yy_n-\xx^\star}{ 2\xx_{n}-\ww_{n}-\yy_{n} -\xx^\star+\ww}\notag\\
    &~~= t_{1,n} +t_{2,n},
    \end{alignat}
    where we set 
    \begin{equation*}
    \begin{cases} 
    t_{1,n} = \scal{\yy_n-\xx^\star }{\QQ\xx_{n} -\QQ\xx^\star}\notag\\
    t_{2,n} = \scal{\yy_n-\xx^\star}{ 2\xx_{n}-\ww_{n}-\yy_{n} -\xx^\star+\ww}\notag\\
    t_{3,n}= \scal{\yy_n-\xx^\star}{\xx_n -\yy_n}\\
    t_{4,n} = \scal{\yy_n-\xx_n}{\xx_{n}-\ww_{n}-\yy_{n} -\xx^\star+\ww}
    \end{cases}
    \end{equation*}
    We next estimate $(t_{1,n})_{n\in\NN_+}$ and $(t_{2,n})_{n\in\NN_+}$ in the right hand side of \eqref{e:pr3}. Since $\yy_n-\xx^\star\weakly 0$, it is bounded, 
    an since $\QQ\xx_n\to\QQ\xx^\star$, we have 
    \begin{equation*}
    |t_{1,n}| \leq \|\yy_n-\xx^\star\| \|\QQ\xx_n-\QQ\xx^\star\| \to 0.
    \end{equation*}  
    Using the monotonicity of $\BB$, we also have 
    \begin{equation*}
    \scal{\xx_n-\xx^\star}{\xx_{n}-\ww_{n} -\xx^\star+\ww} \leq 0,
    \end{equation*}
    and hence
    \begin{alignat}{2}
    t_{2,n}& = t_{3,n} +\scal{\yy_n-\xx^\star}{\xx_{n}-\ww_{n}-\yy_{n} -\xx^\star+\ww}\notag\\
    &= t_{3,n} +t_{4,n} +\scal{\xx_n-\xx^\star}{\xx_{n}-\ww_{n} -\xx^\star+\ww}\notag\\
    &\leq t_{3,n} +t_{4,n} \notag\\
    &\leq |t_{3,n}| +|t_{4,n}| \quad\to\quad 0. \notag
    \end{alignat}
    Therefore, we derive from \eqref{e:pr3} that 
    \begin{alignat}{2}
     \gamma \phi(\|\yy_n-\xx^\star \|)& \leq |t_{1,n}| +t_{2,n} \notag \\
     & \leq |t_{1,n}| +|t_{3,n}| +|t_{4,n}| \quad\to\quad 0, \notag
    \end{alignat}
    which implies that $\yy_n\to\xx^\star$ and hence $\xx_n\to \xx^\star$.
    
     \ref{conc:ivc}: Suppose that $\BB$ is uniformly monotone at $\xx^\star$, then 
      there exists an increasing function $\psi\colon \left[0,+\infty\right[\to \left[0,+\infty\right]$ vanishing only at $0$ 
    such that 
\begin{alignat}{2}
\gamma \psi(\|\xx_n-\xx^\star\|) &\leq \scal{\xx_n-\xx^\star}{\ww_{n}-\xx_n +\xx^\star-\ww}\notag\\
 &= t_{3,n} +t_{4,n} + t_{2,n}\notag\\
 &\leq 2|t_{3,n}| +|t_{4,n}| \quad\to\quad 0 \notag
\end{alignat}
which implies that $\xx_n\to \xx^\star$. \endproof 

\section{Convex Optimization Applications}
\label{s:convex-optimization}

In this section, we present the special instance of \Cref{alg:I3OSM} that applies to Problem~\ref{prob:convex-optimization}. 

\begin{remark}
\label{rem:mono-to-convex}
Problem \ref{prob:convex-optimization} is a special case of Problem \ref{prob:monotone-inclusion}, with $\AAA = \partial{f},~\BB = \partial{g}$, $\QQ = \nabla h$ and $\beta = L^{-1}$ in $\HHH = \mathbb{R}^d$.
\end{remark}

\begin{algorithm}[h]
   \caption{IFDR for Problem~\ref{prob:convex-optimization}}
   \label{alg:I3OSM-convex}
\begin{algorithmic}
\vspace{1mm}
   \STATE {\bfseries Input:} initial points $\overline{\xx}_0 = \overline{\xx}_1$ in $\RR^d$, step size $\gamma$, two sequences of strictly positive numbers $(\tau_n)_{n\in\NN_{+}}$ and $(\lambda_n)_{n\in\NN_{+}}$.
   \STATE {\bfseries Procedure:} 
   \FOR{$n=1,2,\ldots$}
   \STATE $\ww_n = \overline{\xx}_n +\tau_n(\overline{\xx}_n-\overline{\xx}_{n-1})$
   \STATE $\xx_n = \prox_{\gamma g}\ww_n$
   \STATE $\yy_n=\prox_{\gamma f}(2\xx_n-\ww_n - \gamma\nabla h(\xx_n))$
   \STATE $\overline{\xx}_{n+1} = \ww_n +\lambda_n\big(\yy_n -\xx_n\big)$
   \ENDFOR
   \vspace{1mm}
\end{algorithmic}
\end{algorithm}
   \vspace{-.5mm}

\begin{theorem} 
\label{thm:IFDR-convex-optimization}
Suppose that the parameter $\gamma$ and the sequences $(\lambda_n)_{n\in\NN_+}$ and $(\tau_n)_{n\in\NN_+}$ satisfy the conditions given in Theorem \ref{thm:IFDR-monotone-inclusion} with $\beta = L^{-1}$. 
Then, there exists a point $\ww\in\RR^d$ such that the followings hold:
\begin{enumerate}
\item\label{conc:i} $\sum_{n\in\NN_+}\|\overline{\xx}_{n+1}- \overline{\xx}_n\|^2 <+\infty$.
\item \label{conc:ii}
$(\overline{\xx}_n)_{n\in\NN_+}$ converges to  $\ww$.
\item\label{conc:iii}  $(\xx_n)_{n\in\NN_+}$ converges to a solution $\xx^\star = \prox_{\gamma g}\ww$. 
\end{enumerate}
\end{theorem}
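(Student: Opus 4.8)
The plan is to obtain \Cref{thm:IFDR-convex-optimization} as a direct corollary of \Cref{thm:IFDR-monotone-inclusion}, using the operator-theoretic dictionary recorded in \Cref{rem:mono-to-convex} together with the fact that weak and strong convergence coincide in the finite-dimensional space $\RR^d$.

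First I would set $\AAA = \partial f$, $\BB = \partial g$, and $\QQ = \nabla h$ on $\HHH = \RR^d$, and check that these satisfy the hypotheses of \Cref{prob:monotone-inclusion}. Since $f$ and $g$ are proper lower semicontinuous and convex, their subdifferentials $\partial f$ and $\partial g$ are maximally monotone by the classical Moreau--Rockafellar theorem. The only nontrivial point is the cocoercivity of $\QQ = \nabla h$: by the Baillon--Haddad theorem, the gradient of a convex differentiable function with $L$-Lipschitz gradient is $L^{-1}$-cocoercive, so we may take $\beta = L^{-1}$, exactly as in the hypothesis. Under this identification the resolvents reduce to proximal operators, $J_{\gamma \partial f} = \prox_{\gamma f}$ and $J_{\gamma \partial g} = \prox_{\gamma g}$, so \Cref{alg:I3OSM-convex} is precisely \Cref{alg:I3OSM} applied to this instance.

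Next I would invoke \Cref{thm:IFDR-monotone-inclusion}, whose hypotheses on $\gamma$, $(\lambda_n)_{n\in\NN_+}$, and $(\tau_n)_{n\in\NN_+}$ are assumed to hold with $\beta = L^{-1}$. This yields the square-summability of the successive differences verbatim, the existence of a point $\ww$ to which $(\overline{\xx}_n)_{n\in\NN_+}$ converges weakly, and the weak convergence of $(\xx_n)_{n\in\NN_+}$ to $\xx^\star = \prox_{\gamma g}\ww \in \boldsymbol{\mathcal{X}^\star}$. Since $\RR^d$ is finite-dimensional, weak convergence is equivalent to strong convergence; hence the two convergence statements upgrade to strong convergence, and no appeal to the demiregularity or uniform-monotonicity hypotheses (part iv of \Cref{thm:IFDR-monotone-inclusion}) is required.

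Finally I would confirm that the limit $\xx^\star \in \boldsymbol{\mathcal{X}^\star}$ is indeed a minimizer of $f + g + h$: the inclusion $\mathbf{0} \in \partial f(\xx^\star) + \partial g(\xx^\star) + \nabla h(\xx^\star)$, combined with the elementary sum-rule inclusion $\partial f + \partial g + \nabla h \subseteq \partial(f + g + h)$, gives $\mathbf{0} \in \partial(f + g + h)(\xx^\star)$, which is exactly the first-order optimality condition for the convex objective. The only substantive ingredient in this argument is the Baillon--Haddad identity establishing cocoercivity of $\nabla h$; everything else is either a direct citation of \Cref{thm:IFDR-monotone-inclusion} or the finite-dimensional equivalence of weak and strong convergence, so I anticipate no serious obstacle.
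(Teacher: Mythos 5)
Your proposal is correct and takes essentially the same route as the paper, which proves this theorem in one line by citing \Cref{thm:IFDR-monotone-inclusion} in view of the identification $\AAA=\partial f$, $\BB=\partial g$, $\QQ=\nabla h$, $\beta=L^{-1}$ recorded in \Cref{rem:mono-to-convex}. The details you supply (maximal monotonicity of the subdifferentials, Baillon--Haddad cocoercivity of $\nabla h$, equivalence of weak and strong convergence in $\RR^d$, and the inclusion $\partial f+\partial g+\nabla h\subseteq\partial(f+g+h)$ to certify that the limit is a minimizer) are precisely the ones the paper leaves implicit.
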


\begin{proof}
Follows \Cref{thm:IFDR-monotone-inclusion} in view of \Cref{rem:mono-to-convex}.
\end{proof}

\subsection{IFDR for multivariate minimization}

	Let $m$ be a strictly positive integer, and $L$ be a strictly positive real number. 
	For every $i\in\{1,\ldots,m\}$, let $d_i$ be a strictly positive integer and $f_i\colon \RR^{d_i}\to \left]-\infty,+\infty \right]$ be proper lower semicontinuous functions.  
	Suppose that $\varphi \colon \RR^{d_1}\times\ldots\times \RR^{d_m}\to\RR $ is a differentiable convex function with $L$-Lipschitz continuous gradient. 	
	We consider the following multivariate minimization problem:
	\begin{equation*}
\begin{aligned} 
& \underset{(x_i \in \mathbb{R}^{d_i})_{1\leq i\leq m}}{\text{minimize}} & \varphi(\xx) + \sum_{i=1}^m  (f(x_i) + g(x_i) )
\end{aligned}
\end{equation*}

	We denote by $\nabla_i\varphi$ the i$^{th}$ component of $\nabla\varphi$.
	Suppose that the set $\boldsymbol{\mathcal{X}}$ of all point 
	${\xx}= ({x}_1,\ldots,{x}_m)$ to the following coupled system of inclusion is non-empty:
	\begin{equation}
	\label{e:sys}
	\begin{cases}
	0&\in \partial f_1(x_1)+ \partial g_1(x_1) + \nabla_1\varphi({\xx})\notag\\
	\vdots\\
		0&\in \partial f_m({x}_m)+ \partial g_m({x}_m) +\nabla_m \varphi ({\xx})\\
		\end{cases}
	\end{equation}
		
\begin{algorithm}[t]
   \caption{IFDR for multivariate minimization}
   \label{alg:IFDRmulti}
\begin{algorithmic}
\vspace{1mm}
   \STATE {\bfseries Input:} initial points $\overline{\xx}_0 = \overline{\xx}_1$ in $\RR^d$, step size $\gamma$, two sequences of strictly positive numbers $(\tau_n)_{n\in\NN_{+}}$ and $(\lambda_n)_{n\in\NN_{+}}$.
   \STATE {\bfseries Procedure:} 
   \FOR{$n=1,2,\ldots$}
   \FOR{$i=1,2,\ldots,m$}
   \STATE $w_{i,n} = \overline{x}_{i,n} +\tau_n(\overline{x}_{i,n}-\overline{x}_{i,n-1})$
   \STATE $x_{i,n} = \prox_{\gamma g_i}w_{i,n}$
   \ENDFOR
   \FOR{$i=1,2,\ldots,m$}
   \STATE $y_{i,n}=\prox_{\gamma f}(2x_{i,n}-w_{i,n} - \gamma\nabla \varphi(\xx_n))$
   \ENDFOR
   \STATE $\overline{\xx}_{n+1} = \ww_n +\lambda_n\big(\yy_n -\xx_n\big)$
   \ENDFOR
   \vspace{1mm}
\end{algorithmic}
\end{algorithm}

Suppose that the parameters $\gamma$, $(\lambda_n)_{n\in\NN_+}$ and $(\tau_n)_{n\in\NN_+}$ satisfy the conditions of \Cref{thm:IFDR-monotone-inclusion} with $\beta = 1/L$.
Then, for each $i\in\{1,\ldots,m\}$, there exists $\overline{x}_i\in\RR^{d_i}$ such that the following hold.
\begin{enumerate}
\item\label{c:0i} $\sum_{n\in\NN}\|\overline{x}_{i,n+1}- \overline{x}_{i,n}\|^2 <+\infty$.
\item \label{c:0ii}
$(\overline{x}_{i,n})_{n\in\NN}$ converges to a point $\overline{x}_i$.
\item\label{c:0iii}  $(x_{i,n})_{n\in\NN_+}$ converges  to a point $x_i^\star = J_{\gamma B_i}\overline{x}_i $ and $\xx^\star= (x_1^\star,\ldots,x_m^\star)\in\boldsymbol{\mathcal{X}^\star}$.
\end{enumerate}

   \begin{remark} 
   When $g=0$ and $\tau_n \equiv 0$, \Cref{alg:IFDRmulti} reduces to the one proposed in \cite{Combettes2012}.
 \end{remark}

 \section{Adaptive Restart}
 \label{sec:restart}

The choice of the inertia parameter $(\tau_n)_{n \in \NN_+}$ directly affects the performance of IFDR. 
In practice, we observe that the parameter $\tau$ imposed by \Cref{thm:IFDR-monotone-inclusion} is too conservative, 
in the sense that some choices $\tau_n > \tau$ perform better in practice. 

In this section, we propose a heuristic adaptive restart technique for choosing a practical inertia parameter. 
The proposed scheme outperforms other methods in our experiments (cf. \Cref{s:numer}). 

\begin{algorithm}[h]
   \caption{IFDR with restart (IFDR-R)}
   \label{alg:I3OSM-R}
\begin{algorithmic}
\vspace{1mm}
   \STATE {\bfseries Input:} {Input of IFDR, a function\footnotemark ~$\psi(\cdot):\RR^d\to\RR$}.
   \STATE {\bfseries Procedure:} 
   \STATE $ t = 1$
   \FOR{$n=1,2,\ldots$}
   \STATE {Apply one iteration of IFDR with $\tau_n = \tfrac{n-t}{n+3-t}$}
   \IF {$\psi(\xx_n) \geq \psi(\xx_{n-1})$} 
   	\STATE {Set  $t = n$ and $\tau_n = 0$}
	 \STATE {Recompute last iteration with new $\tau_n$}
   \ENDIF
   \ENDFOR
   \vspace{1mm}
\end{algorithmic}
\end{algorithm}

If both of the nonsmooth terms $f$ and $g$ in Problem~\ref{prob:convex-optimization} are Lipschitz continuous, a natural choice in \Cref{alg:I3OSM-R} would be $\psi = (f+g+h)$. 
If one of them is a constraint indicator function, we can ensure the feasibility of $\xx_n$ by choosing this term as $g$ in our template. 
In this case, $g=0$ for all $\xx_n$ hence the natural choice is $\psi = (f+h)$. 
If both of the terms are indicator functions, we recommend the following convention:
\begin{equation*}
\rotatebox[origin=c]{90}{\text{restart iff~}}~
\begin{cases}
 ~ f(\xx_n) = \infty, ~ f(\xx_{n-1}) = 0  \\
 ~ f(\xx_n) = \infty,~ f(\xx_{n-1}) = \infty, ~\&~ h(\xx_n) \geq h(\xx_{n-1})   \\
 ~ f(\xx_n) = f(\xx_{n-1}) = 0, ~\&~ h(\xx_n) \geq h(\xx_{n-1})
\end{cases} 
\end{equation*}

\section{Numerical Experiments }
\label{s:numer}
In this section, we present numerical evidence to assess the empirical performance of the proposed method. 
Due to its generality, we compare our framework against the variants of the three-operator splitting method (TOSM) \cite[Algorithm~1]{Davis2017threeop}. It may, however, be possible to outperform the computational performance with more specialized methods in specific applications. 

We also present runtime comparison against the state of the art interior point methods. 
Note that \cite{Davis2017threeop} also proposes two schemes with ergodic averaging that feature improved theoretical rate of convergence. 
However, we omitted these variants as they performed worse than the original method in practice. 

A fair comparison between the operator splitting schemes is not an easy task due to the large number of tuning parameters of each method. 
For the ease of comparison and the transparency, we fixed $\lambda_n=1$ for all algorithms. 
This is a natural choice since the convergence rates are shown only for this case in \cite{Davis2017threeop}. 
Unless described otherwise, we used the same step parameter $\gamma$ for all algorithms. 
For IFDR, we used the maximum fixed inertia parameter $\tau_n = \tau$ that satisfies \Cref{thm:IFDR-monotone-inclusion} for the given $\gamma$ and $\lambda_n$. 

\subsection{Markowitz portfolio optimization}

In Markowitz portfolio optimization problem, we set a target return and aim to reduce the risk by minimizing the variance. 
This problem can be formulated as a convex optimization problem as in \cite{Brodie2009}:
$$
\begin{aligned}
& \underset{\xx \in \mathbb{R}^d}{\text{minimize}} & & \E \left[ \vert \boldsymbol{a}_i^T \xx - b \vert^2 \right]  \\
& \text{subject to} & & \xx \in \Delta, \quad \boldsymbol{a}_{av}^T~\xx \geq b
\end{aligned}
$$
where $\Delta$ is the standard simplex,
$\boldsymbol{a}_{av}=\E \left[ \boldsymbol{a}_i \right] $ is the mean return of each asset that is assumed to be known, and $b$ denotes the target return. 

We use 4 different real portfolio datasets that are also considered by \cite{yurtsever2016stcm, Borodin2004}: 
Dow Jones industrial average (DJIA, $30$ stocks, $507$ days), 
New York stock exchange (NYSE, $36$ stocks, $5651$), 
Standard \& Poor's 500 (SP500, $25$ stocks, $1276$ days) and 
Toronto stock exchange (TSE, $88$ stocks, $1258$ days) 

We replicate the experimental setup considered in \cite{yurtsever2016stcm}: 
We split all datasets into test ($10 \%$) and train ($90 \%$) partitions uniformly random. 
We set the desired return as the average return over all assets in the training set, $b = \mathrm{mean}(\boldsymbol{a}_{av})$, and 
we start all algorithms from the zero vector.
We first roughly tuned TOSM and found the best step size parameter as $\gamma = 1.99/L$. 
For this choice, \Cref{thm:IFDR-monotone-inclusion} enforces $\tau_n = 0$ (in which case IFDR is equivalent to TOSM). 
Nevertheless, IFDR-R outperforms its competitors by adapting to the best fixed inertia parameter. 
The results of this experiment are compiled in \Cref{fig:POfig}. 
We compute the objective function over the datapoints in the test partition, $h_{\mathrm{test}}$. 

\subsection{Matrix completion} 		  
\label{sec:matcomp}

We present the results for solving the matrix completion problem with MovieLens 100K benchmark, which consists of $100,\!000$ ratings $b \in \{1,2,3,4,5 \}$ from $1000$ users on $1700$ movies. 
Let $E$ be the training set, and define the associated sampling operator $A: \mathbb{R}^{n \times p} \to \mathbb{R}^d$. 
Then, we can formulate this problem as follows:
\begin{equation*}
\begin{aligned}
& \underset{X \in \mathbb{R}^{m \times p}}{\text{minimize}} & &  \tfrac{1}{2} \| AX - b \|^2 + \rho \|X\|_\ast  \\
& \text{subject to} & & 1 \leq X_{ij} \leq 5
\end{aligned}
\end{equation*}
where $\| \cdot \|_{*}$ denotes the nuclear norm (i.e., sum of the singular values). 

We use the default \texttt{ub} test and train partitions of the data. We remove the movies that are not rated by any user, and the users that have not rated any movie. 
We chose $\rho = 8.4$ via cross validation. 

We tried few different $\gamma$. TOSM performs best when $\gamma = 1.99/L$. 
For this choice, IFDR is equivalent to TOSM, and IFDR-R performs almost the same. 
However, we observed a notable performance improvement for IFDR-R with smaller $\gamma$. 
IFDR with maximum $\tau$ satisfying the condition fails to impress, yet we observed that IFDR can be tuned to get a similar performance as IFDR-R. 
See Figure~\ref{fig:RMSEMC}.

\begin{table*}[ht]
\small
\begin{center}
\begin{tabular}{c|c||ccccccccc}
{Data set} 	& $d$ 	& {\bf IFDR} 	& {\bf IFDR-R} 	& {\bf TOSM} 	 & {\bf TOSM-$\mu$} 	 & {\bf Mosek}	&{\bf SDPT3}	&{\bf SeDuMi} \\
\hline 
LF10        		& 18 		& 0.016		& 0.007		& 0.029		& 0.005		& 1.489		& 0.620		& 0.412			\\
karate        	& 34 		& 0.024		& 0.024		& 0.024		& 0.035		& 1.416		& 1.462		& 1.280			\\
will57        	& 57		& 0.032		& 0.035		& 0.029		& 0.029		& 3.260		& 5.234		& 23.629 			\\
dolphins        	& 62		& 0.036		& 0.039		& 0.037		& 0.040		& 4.752		& 6.833		& 31.013			\\
ash85        	& 85		& 0.051		& 0.066		& 0.046		& 0.048		& 16.072		& 23.785		& 393.584			\\
football        	& 115	& 0.082		& 0.108		& 0.098		& 0.108		& 61.962		& 104.238		& -				\\
west0156        	& 156	& 0.066		& 0.042		& 0.089		& 0.021		& 290.025		& -		& -					\\
jazz        		& 198	& 0.462		& 1.075		& 0.604		& 0.482		& 1759.638	& -		& -					\\
\end{tabular}
 \vspace{-.5em}
\end{center}
\caption{CPU time for projecting a $d \times d$ random matrix onto DNN cone in seconds. Experiments are done in Matlab R2015a on a MacBook with 2.6 GHz quad-core Intel Core i7 CPU with 6MB shared L3 cache and 16 GB 1600 MHz DDR3 memory. Data sets are chosen from \cite{SparseMatrix}.}
 \label{tab:DNNexperiment}
\end{table*}

 \begin{figure*}[ht!]
 \begin{center}
\includegraphics[width=\linewidth]{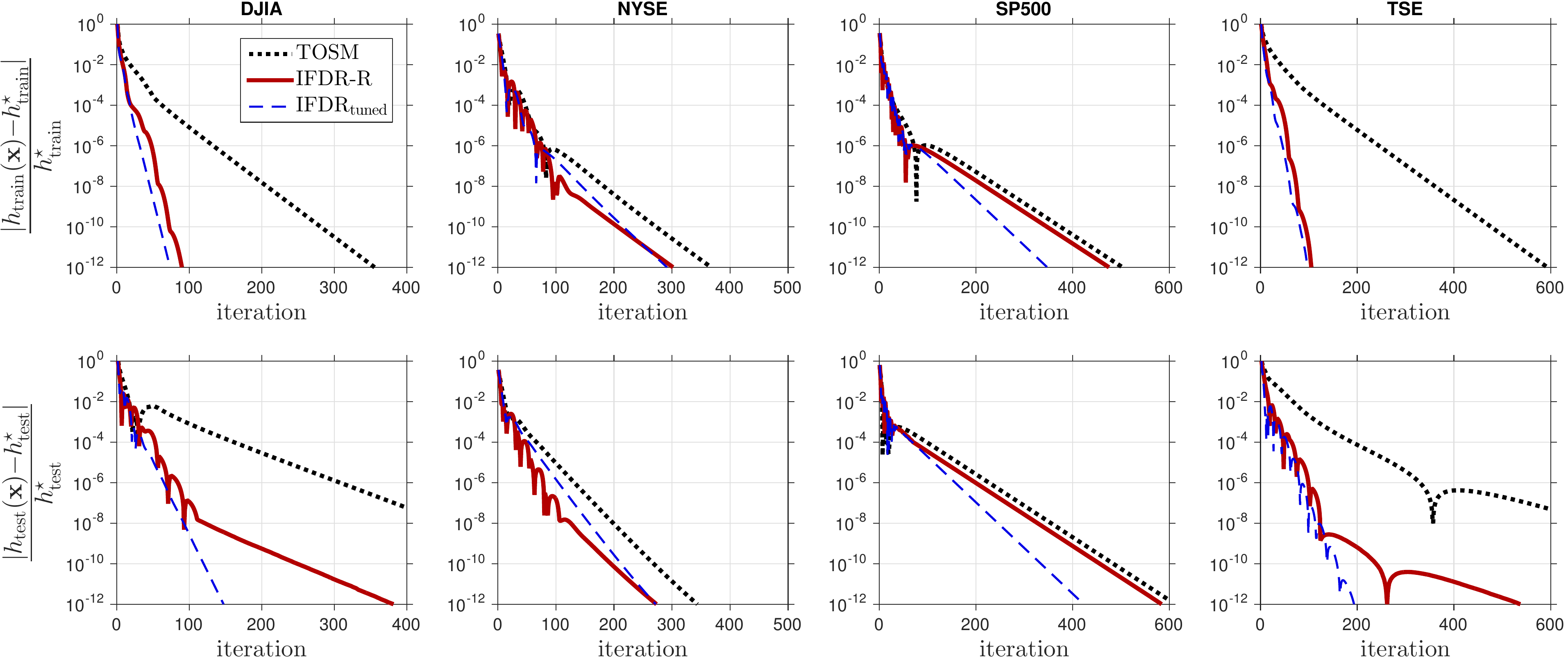} 
\vspace{-.6em}
\caption{{\bf  Portfolio opt.} Columns represent different datasets. Loss on the train {\em(Top)} and test {\em(Bottom)} data. }
\label{fig:POfig}
\end{center}
\end{figure*} 

\begin{figure*}[ht!]
\begin{center}
\includegraphics[width=\linewidth]{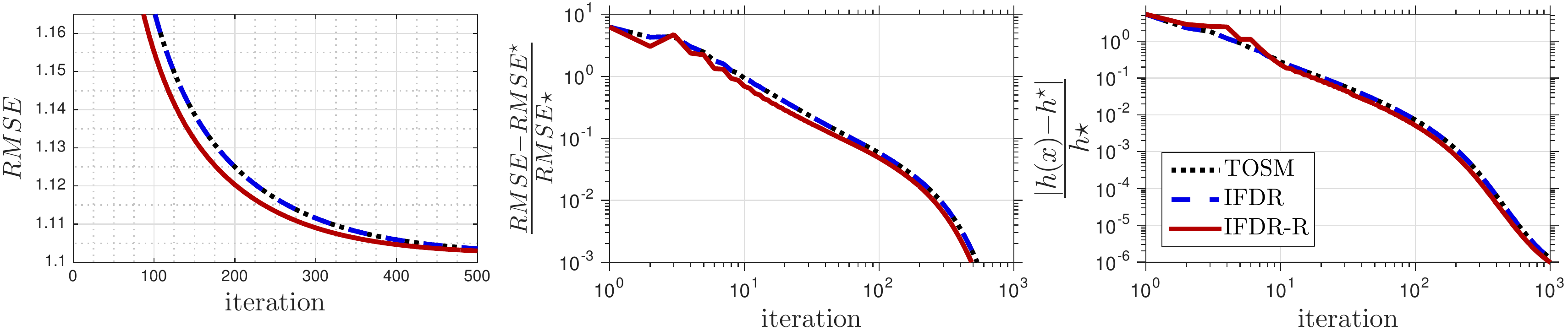} 
\includegraphics[width=\linewidth]{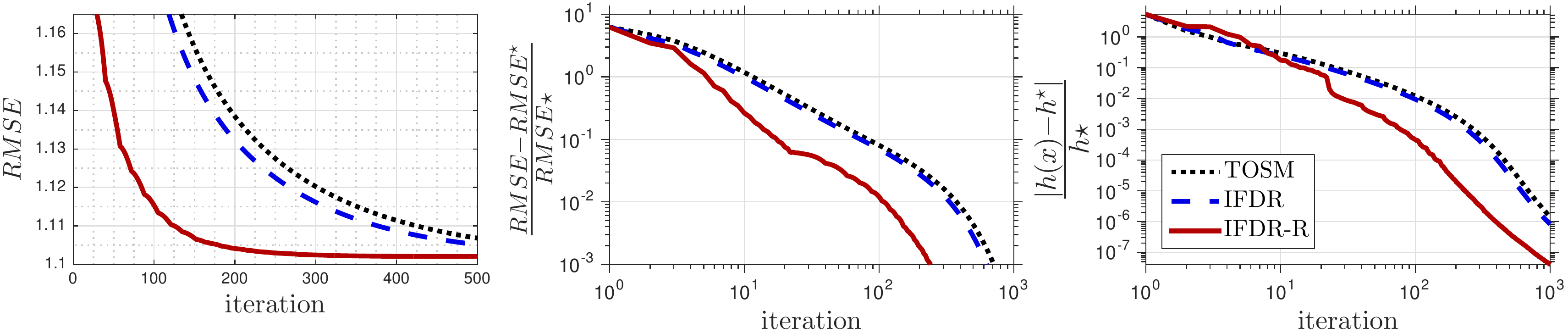} 
\includegraphics[width=\linewidth]{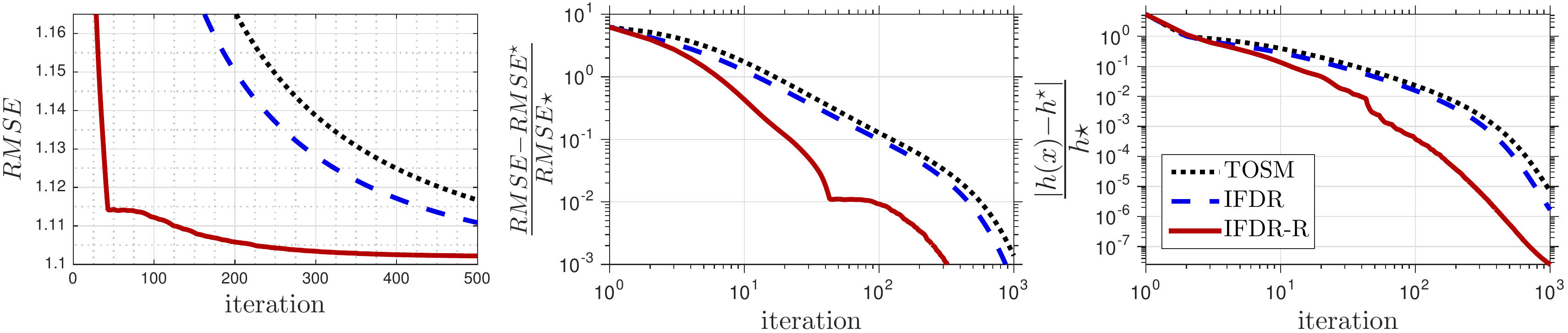} 
\includegraphics[width=\linewidth]{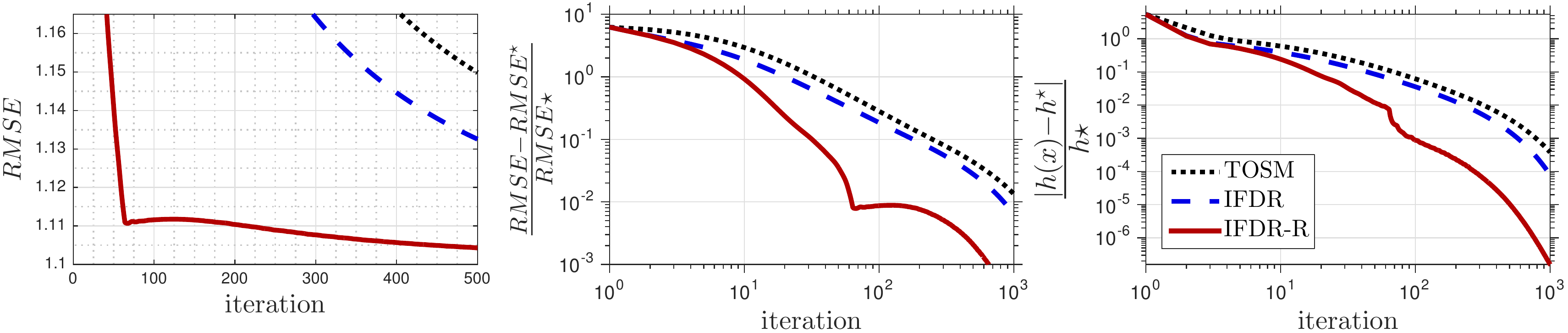} 
\vspace{-.6em}
\caption{ {\bf Matrix completion.} {Convergence behavior of IFDR compared to TOSM in matrix completion problem with various choices of $\gamma$.  
($\tfrac{1.99}{L}$, $\tfrac{1.5}{L}$, $\tfrac{1}{L}$ and $\tfrac{0.5}{L}$ from top to bottom.)} }
\label{fig:RMSEMC}
\end{center}
\end{figure*}

 \begin{figure*}[ht!]
 \begin{center}
\includegraphics[width=\linewidth]{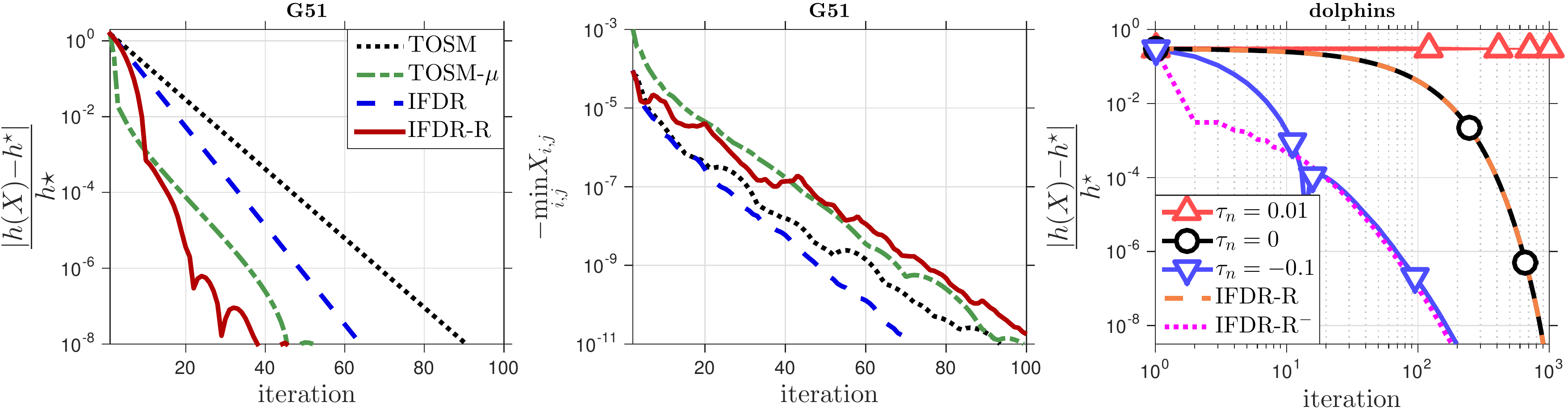} 
\vspace{-.6em}
\caption{{\bf Projection onto DNN cone.} {\em(Left \& Middle)} TOSM(-$\mu$) and IFDR(-R) on `G51' dataset ($d = 10^3$). 100 iterations of IFDR takes 17.262 sec. {\em(Right)} IFDR with different $\tau$ on `dolphins' dataset. }
\label{fig:DNNProj}
\end{center}
\end{figure*} 

\clearpage

\subsection{Projections to doubly nonnegative cone }
\label{sec:DNNproj}

A positive semidefinite matrix with nonnegative coefficients is said to be doubly nonnegative (DNN). 
Optimization over the cone of DNN matrices is effective for an important class of NP-hard optimization problems, but these problems are computationally challenging due to the complexity of the DNN cone. 

We consider the projection of a matrix onto DNN cone:
\begin{equation} \label{eq:DNNproblem}
\begin{aligned}
& \underset{X \in \mathbb{R}^{d\times d}}{\text{minimize}} & &  \tfrac{1}{2} \| X - Z \|_F^2\\
& \text{subject to} & & X \geq 0, \quad X \succeq 0,
\end{aligned}
\end{equation}
where $\|\cdot\|_F$ denotes the Frobenius norm. 
We compare our framework against TOSM and its variant for strongly convex objectives TOSM-$\mu$ \cite[Algorithm~2]{Davis2017threeop}.
We also compare against the state of the art interior point methods: SeDuMi, SDPT3 and Mosek under CVX framework \cite{sedumi,sdpt3,mosek,cvx}. 

We use datasets from \cite{SparseMatrix}, and configure the test setup as follows: 
First, we solve the problem with the aforementioned CVX solvers. 
Then, we run operator splitting methods until they satisfy the following stopping criteria:
\begin{enumerate}
\item $\displaystyle{\min_{i,j} X_{i,j} \geq 0 ~~\text{or}~~  \min_{i,j} X_{i,j} \geq \min_{i,j} X^{\mathrm{Mosek}}_{i,j}}$ 
\vspace{-2mm}
\item $ \displaystyle{\| X - Z \|_F \leq \| X^\mathrm{Mosek} - Z \|_F}$
\end{enumerate}
We set our stopping criteria with respect to Mosek since it is more scalable than the other two. 
Note that the iterates are exactly positive semidefinite, since $\xx_n$ is obtained by projecting $\ww_n$ onto this cone. 
$\gamma = 1.99/L$ did not perform well in this example in contrast with the previous experiments. 
Some rough tuning yields $\gamma = 0.1$ that worked well with all datasets both for TOSM and IFDR(-R).
Note that TOSM$-\mu$ has a dynamic step size $\gamma_n$, which also requires a tuning parameter $\eta$. 
We tuned it as $\eta = 0.1$. 
We initialized all methods from the zero matrix. 
\Cref{tab:DNNexperiment} presents the CPU time of different methods. 

We also ran an instance with $10^3 \times 10^3$ dimensional matrix. 
In this case, we approximated $h^\star$ by $10^4$ iterations of TOSM$-\mu$. 
Results of this experiments are shown on the left and middle panels of \Cref{fig:DNNProj}.

As a final remark, we underline that a small $\tau_n = \tau > 0$ caused IFDR to fail when $\gamma = 1.99/L$ and $\lambda_n=1$. 
This empirically proves the tightness of the conditions listed in \Cref{thm:IFDR-monotone-inclusion}, which enforces $\tau_n = 0$ for these choices. 
Remark that IFDR-R works well even in this difficult setting. 
In view of \Cref{rem:negativetau}, we also tried tuning $\tau_n$ with negative values. 
We observed that a modified version of IFDR-R, which uses the negative of $\tau_n$ can adapt to the best negative parameter. 
These results are compiled in the right panel of \Cref{fig:DNNProj}.

\subsection{Arbitrarily slow example of TOSM}
\label{sec:ArbitrarySlowj}

Intriguingly, we can show that IFDR guarantees $\mathcal{O}(1/n^2)$ convergence rate for solving the pathological example presented in \cite[Section~3.4]{Davis2017threeop}. 
In this section, we first briefly describe this example, we prove the rate and present the numerical demonstrations. 

Let us consider $\HH = \RR^2\oplus\RR^2\ldots$, and 
$(\zeta_n)_{n\in\NN}$ be a sequence in $\left]0, \pi/2\right]$ such that $\zeta_n\to 0$, set $e_0 =[1,0]\in\RR^2$ and $e_{\zeta} = R_{\zeta}e_0$, where 
$R_{\zeta}$ is the counterclockwise rotation in $\RR^2$ by $\zeta$ degrees. 
Define the closed vector subspaces $V$ and $V_1$ as follows:
\begin{equation}
V = \RR^2e_0\oplus\RR^2e_0\ldots 
\quad \text{and}\quad
V_1= \RR^2e_{\zeta_0}\oplus\RR^2e_{\zeta_1}\ldots.
\end{equation}

The problem is to minimize the sum $f+g+h$, where these terms are defined as follows:
\begin{equation}
g = \iota_{V} \quad\quad\quad f = \iota_{V_1} + (\rho/2)\|\cdot\|^2 \quad\quad\quad h = \frac{1}{2}\|\cdot\|^2.
\end{equation}
Here, $\iota$ denotes the indicator function.

It is shown in \cite[Theorem~3.4]{Davis2017threeop}, that for TOSM (recall that IFDR recovers TOSM as a special case with $\tau_n \equiv 0$) the sequence $(\overline{\xx}_n)_{n\in\NN}$ converges arbitrarily slow to $0$ even if $(\yy_n)_{\in\NN}$ converges to $0$ with the rate $o(1/\sqrt{n})$. 

Next, we prove that IFDR with the proper choice of sequence $(\tau_n)_{n \in \NN}$ converges with a guaranteed convergence rate in this example. 

\begin{lemma} \label{t:2} 
Assume that $g = \iota_C$ for some closed convex set $C$. 
Let $\theta_n =\mathcal{O}( 1/n^{s})$ for some $s\in \left]0,1\right]$, and choose 
\begin{equation} \label{eqn:condsThm34}
\gamma \leq \frac{1}{L}, \quad\quad \lambda_n \equiv 1 \quad\quad \text{and} \quad\quad \tau_n = \frac{\theta_n(1-\theta_{n-1})}{\theta_{n-1}}.
\end{equation}
Suppose that 
$$
(1-\theta_n)\ee_n  = \mathcal{O}(\theta_{n}^2),
$$
where 
$$
\ee_n =\max\{0, \scal{\xx_n-\yy_n}{\ww_{n-1}-\xx_{n-1}} + \scal{\yy_{n-1} -\xx_n}{\ww_n-\xx_n} \} .
$$
Define $\zz_n$ as
$$
\zz_{n} = \overline{\xx}_n + \frac{1-\theta_{n-1}}{\theta_{n-1}}(\overline{\xx}_n-\overline{\xx}_{n-1}).
$$
Assume that
$$
\|\zz_n-\xx^\star\|^2 - \|\zz_{n+1}-\xx^\star\|^2  
$$ 
is bounded for all $n \in \NN_+$.  

Then, the following estimate holds:
$$
(h+f)(\yy_n) - (h+f)(\xx^\star) = \mathcal{O}(\theta_{n}) = \mathcal{O}(1/n^s).
$$
 \end{lemma}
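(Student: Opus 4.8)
The plan is to run a Nesterov/FISTA-type estimate-sequence argument adapted to the three-operator structure, writing $F:=h+f$ and tracking $a_n:=(h+f)(\yy_n)-(h+f)(\xx^\star)\ge 0$. First I would record the kinematic identities forced by the inertia parameter $\tau_n=\theta_n(1-\theta_{n-1})/\theta_{n-1}$ and by the definition of $\zz_n$. A direct computation gives
\[
\ww_n=(1-\theta_n)\overline{\xx}_n+\theta_n\zz_n,\qquad \overline{\xx}_{n+1}=(1-\theta_n)\overline{\xx}_n+\theta_n\zz_{n+1},
\]
so that, with $\lambda_n\equiv 1$ and $\Delta_n:=\zz_{n+1}-\zz_n$,
\[
\yy_n-\xx_n=\overline{\xx}_{n+1}-\ww_n=\theta_n\Delta_n,\qquad \overline{\xx}_n-\yy_{n-1}=\ww_{n-1}-\xx_{n-1}.
\]
These identities are exactly what convert the algorithm's three steps into increments of the single sequence $\zz_n$, and they make $\xx^\star$ the natural reference point.

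Next I would establish the one-step descent inequality. Writing the optimality condition of the proximal step that defines $\yy_n$, namely $\gamma^{-1}(2\xx_n-\ww_n-\yy_n)-\nabla h(\xx_n)\in\partial f(\yy_n)$, and combining the subgradient inequality for $f$ with the descent lemma and convexity of $h$ (using $\gamma\le 1/L$ to dominate the term $\tfrac{L}{2}\|\yy_n-\xx_n\|^2$ by $\tfrac{1}{2\gamma}\|\yy_n-\xx_n\|^2$), I obtain, for every $\uu$,
\[
(h+f)(\yy_n)-(h+f)(\uu)\le \tfrac{\theta_n^2}{2\gamma}\|\Delta_n\|^2-\tfrac{\theta_n}{\gamma}\scal{\Delta_n}{(\ww_n-\xx_n)+\yy_n-\uu}-\tfrac{1}{\gamma}\scal{\ww_n-\xx_n}{\xx_n-\uu}.
\]
The last term is the projection inner product: since $\xx_n=\prox_{\gamma g}\ww_n=P_C\ww_n$, it is $\le 0$ whenever $\uu\in C$, but it must be retained when $\uu\notin C$.

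Then I would form the convex combination of this inequality at $\uu=\xx^\star\in C$ with weight $\theta_n$ and at $\uu=\yy_{n-1}$ with weight $1-\theta_n$; the left side becomes $a_n-(1-\theta_n)a_{n-1}$. The crucial simplification of the right side uses the first paragraph: the reference residual $(\ww_n-\xx_n)+\yy_n-[\theta_n\xx^\star+(1-\theta_n)\yy_{n-1}]$ equals $\overline{\xx}_{n+1}-\theta_n\xx^\star-(1-\theta_n)\yy_{n-1}=\theta_n(\zz_{n+1}-\xx^\star)+(1-\theta_n)(\overline{\xx}_n-\yy_{n-1})$, together with $\overline{\xx}_n-\yy_{n-1}=\ww_{n-1}-\xx_{n-1}$. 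Feeding $\theta_n(\zz_{n+1}-\xx^\star)$ into the quadratic and completing the square via $\zz_{n+1}-\xx^\star=(\zz_n-\xx^\star)+\Delta_n$ collapses the main terms to $\tfrac{\theta_n^2}{2\gamma}\big(\|\zz_n-\xx^\star\|^2-\|\zz_{n+1}-\xx^\star\|^2\big)$, while the leftover cross terms combine (using $\theta_n\Delta_n=\yy_n-\xx_n$ and the retained projection term with weight $1-\theta_n$) into exactly
\[
\tfrac{1-\theta_n}{\gamma}\big(\scal{\xx_n-\yy_n}{\ww_{n-1}-\xx_{n-1}}+\scal{\yy_{n-1}-\xx_n}{\ww_n-\xx_n}\big)\le \tfrac{(1-\theta_n)\ee_n}{\gamma}.
\]
The hard part will be this bookkeeping: matching the leftover terms precisely to the definition of $\ee_n$ and checking that the projection inequality supplies the correct sign only for the $\xx^\star$ reference (which is why $g=\iota_C$ and $\xx^\star\in C$ are needed), with the $\yy_{n-1}$ reference producing the error instead.

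Finally, invoking the hypotheses $(1-\theta_n)\ee_n=\mathcal{O}(\theta_n^2)$ and the boundedness of $\|\zz_n-\xx^\star\|^2-\|\zz_{n+1}-\xx^\star\|^2$, the combined estimate reduces to the scalar recursion $a_n\le(1-\theta_n)a_{n-1}+C\theta_n^2$ for some constant $C$. I would close with a standard comparison argument: setting $A_n:=a_n/\theta_n$ and using $\theta_n=\mathcal{O}(1/n^s)$ to verify that the effective contraction factor $(1-\theta_n)\theta_{n-1}/\theta_n$ is $1-\Theta(\theta_n)$ with $\sum_n\theta_n=+\infty$ while $C\theta_n^2/\theta_n=\mathcal{O}(\theta_n)$, so that $(A_n)_{n\in\NN_+}$ stays bounded. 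Hence $a_n=\mathcal{O}(\theta_n)=\mathcal{O}(1/n^s)$, which is the asserted estimate.
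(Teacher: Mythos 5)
Your proposal is correct and follows essentially the same route as the paper's own argument: the paper obtains the identical one-step recursion \eqref{e:tet1} (it does so via the tilted function $f_n = f+\tfrac{1}{\gamma}\scal{\cdot}{\ww_n-\xx_n}$ and a citation of \cite[Lemma 8]{Bonettini2017}, where you re-derive the same inequality directly from the prox optimality condition, the descent lemma and convexity of $h$), then bounds the error term $\xi_n$ by $\tfrac{1-\theta_n}{\gamma}\ee_n$ using exactly your projection inequality $\scal{\xx^\star-\xx_n}{\ww_n-\xx_n}\leq 0$, and closes with the same scalar recursion $a_n \leq (1-\theta_n)a_{n-1}+\mathcal{O}(\theta_n^2)$. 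If anything, your handling of the final comparison step (checking that the contraction factor $(1-\theta_n)\theta_{n-1}/\theta_n$ is $1-\Theta(\theta_n)$) is slightly more explicit than the paper's, which asserts the implication $a_n=\mathcal{O}(\theta_n)$ without detail.
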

 
 \begin{proof}
     We have $\scal{\xx^\star-\xx_n}{\ww_n-\xx_n} \leq 0$ since $g=\iota_C$. Hence
      \begin{alignat}{2}
     \xi_n &\leq \frac{1-\theta_n}{\gamma} ( \scal{\xx_n-\yy_n}{\ww_{n-1}-\xx_{n-1}} + \scal{\yy_{n-1} -\xx_n}{\ww_n-\xx_n} ) \notag\\
     &\leq \frac{1-\theta_n}{\gamma}\max\{  \scal{\xx_n-\yy_n}{\ww_{n-1}-\xx_{n-1}} + \scal{\yy_{n-1} -\xx_n}{\ww_n-\xx_n},0\}\notag\\
     &=\frac{1-\theta_n}{\gamma}\ee_n 
              \end{alignat}
           Under the conditions listed in \Cref{t:2}, we have the following bound: 
           \begin{equation*}
           F(\yy_n) - F(\xx^\star) \leq (1-\theta_n)  F(\yy_{n-1}) - F(\xx^\star) + \mathcal{O}(\theta^2).
           \end{equation*}
           This implies 
           \begin{equation*}
           F(\yy_n) - F(\xx^\star) = \mathcal{O}(\theta_n) = \mathcal{O}(1/n^s).
           \end{equation*} 
           \end{proof}

\begin{remark}
In the case when $C=\RR^d$, IFDR reduces the well-known inertial forward-backward algorithm investigated in \cite{Villa2013, attouch2016,attouch2017,Beck2009fista,Chambolle2015,Bonettini2017}. 
Furthermore, in this special case the condition \eqref{e:sum1} is satisfied since $\overline{\xx}_{n} = \yy_{n-1}$.  
We also empirically verified that the rate $\mathcal{O}(1/n^2)$ does not hold may not hold when \eqref{e:sum1} is not satisfied.
\end{remark}

\begin{figure}[h]
 \begin{center}
\includegraphics[width=.85\linewidth]{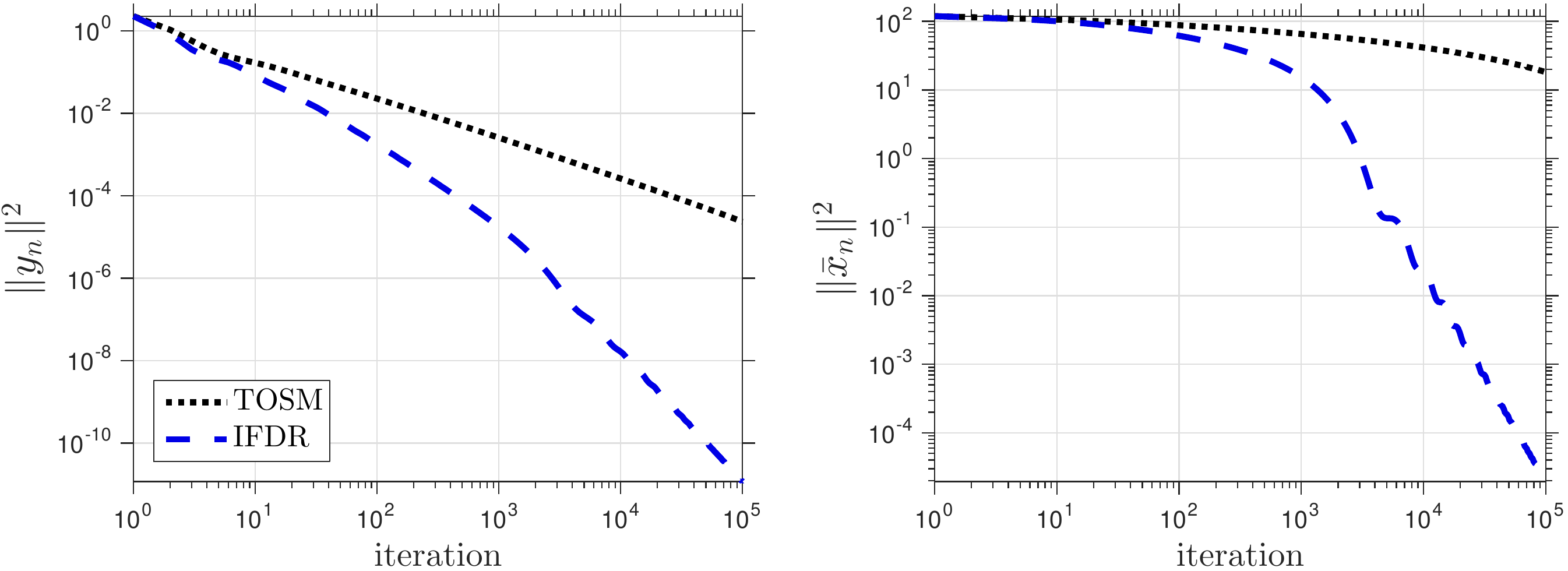} 
\caption{Convergence behavior of IFDR and TOSM for the concept problem in \cite[Section~3.4]{Davis2017threeop}. 
In contrast to TOSM, IFDR does not converge arbitrarily slow in $\|\overline{\xx}_n\|$, indeed it attains empirical $\mathcal{O}(1/n^2)$ rate.}
\label{fig:abslow}
\end{center}
\end{figure} 

\begin{theorem} \label{t:3}
Assume that $g = \iota_C$ for some closed convex set $C$. 
Let us choose $\gamma$, $(\lambda_n)_{n\in\NN_+}$ and $(\tau_n)_{n\in\NN_+}$ as in \eqref{eqn:condsThm34}, with $\theta_n =t/(n+t)$ for some $t\geq 2$. Suppose that
\begin{equation}
\label{e:sum1}
\sum_{n\in\NN_{+}} \theta_{n}^{-2}\ee_n < +\infty,
\end{equation}
where 
\begin{equation*}
\quad \ee_n =\max\{0, \scal{\xx_n-\yy_n}{\ww_{n-1}-\xx_{n-1}} + \scal{\yy_{n-1} -\xx_n}{\ww_n-\xx_n} \} .
\end{equation*}
Then, the following holds:
\begin{equation}
 \label{e:re1}
(h+f)(\yy_n) - (h+f)(\xx^\star) = \mathcal{O}(\theta_{n}^2) = \mathcal{O}(1/n^2).
\end{equation}
Furthermore, we have:
\begin{equation*}
\sum_{n\geq 1} n\left((f+h)(\yy_{n-1}) - (f+h)(\xx^\star)\right) < \infty. 
\end{equation*}
\end{theorem}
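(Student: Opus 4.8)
The plan is to run a Nesterov-type energy (Lyapunov) argument tailored to the three-operator structure, upgrading the one-step estimate already obtained in the proof of \Cref{t:2}. First I would record the algebraic identities that the choice \eqref{eqn:condsThm34} produces. Since $\tau_n = \theta_n(1-\theta_{n-1})/\theta_{n-1}$, the extrapolation point satisfies $\ww_n = (1-\theta_n)\overline{\xx}_n + \theta_n\zz_n$, and since $\lambda_n\equiv 1$ gives $\overline{\xx}_{n+1} = \ww_n + \yy_n - \xx_n$, a short computation yields the clean distance recursion
\begin{equation*}
\zz_{n+1} = \zz_n + \tfrac{1}{\theta_n}(\yy_n - \xx_n),
\end{equation*}
so that $\|\zz_{n+1}-\xx^\star\|^2$ expands into $\|\zz_n-\xx^\star\|^2$ plus cross and quadratic terms in $\yy_n-\xx_n$. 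These are exactly the terms that will telescope.

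Next I would derive the fundamental one-step inequality. Combining the descent lemma for $h$ (valid because $\gamma\le 1/L$), the subgradient inequality for $\prox_{\gamma f}$ evaluated at $\yy_n$, and the projection inequality $\scal{\xx^\star-\xx_n}{\ww_n-\xx_n}\le 0$ coming from $g=\iota_C$, one obtains an estimate of the form
\begin{equation*}
\tfrac{1}{\theta_n^2}\big(F(\yy_n)-F(\xx^\star)\big) + \tfrac{1}{2\gamma}\|\zz_{n+1}-\xx^\star\|^2 \le \tfrac{1-\theta_n}{\theta_n^2}\big(F(\yy_{n-1})-F(\xx^\star)\big) + \tfrac{1}{2\gamma}\|\zz_n-\xx^\star\|^2 + \tfrac{1}{\theta_n^2}\ee_n,
\end{equation*}
where $F=h+f$ and $\ee_n$ is precisely the nonnegative residual of the three-operator cross terms (the quantity inside the max in its definition). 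This is the step I expect to be the main obstacle: unlike plain forward-backward, the cross terms here do not cancel, and the whole point is that the leftover is controlled by $\ee_n$; getting the bookkeeping so that the distance terms telescope and the uncancelled remainder is exactly $\theta_n^{-2}\ee_n$ is the crux. Most of this is already carried out in the proof of \Cref{t:2} up to the bound $\xi_n\le \tfrac{1-\theta_n}{\gamma}\ee_n$; I would reuse that computation rather than redo it.

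Then I would close the argument by telescoping. The key arithmetic fact is that for $\theta_n=t/(n+t)$ with $t\ge 2$ one has $\tfrac{1-\theta_n}{\theta_n^2}\le \tfrac{1}{\theta_{n-1}^2}$; indeed $\tfrac{1}{\theta_{n-1}^2}-\tfrac{1-\theta_n}{\theta_n^2}=\tfrac{n(t-2)+(t-1)^2}{t^2}\ge 0$. Defining the energy
\begin{equation*}
\mathcal{E}_n = \tfrac{1}{\theta_{n-1}^2}\big(F(\yy_{n-1})-F(\xx^\star)\big) + \tfrac{1}{2\gamma}\|\zz_n-\xx^\star\|^2,
\end{equation*}
the one-step inequality gives $\mathcal{E}_{n+1}\le \mathcal{E}_n + \theta_n^{-2}\ee_n$. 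Under \eqref{e:sum1} the error is summable, so $(\mathcal{E}_n)_{n\in\NN_+}$ is bounded; hence $F(\yy_{n-1})-F(\xx^\star)\le \theta_{n-1}^2\,\mathcal{E}_n = \mathcal{O}(\theta_{n-1}^2)=\mathcal{O}(1/n^2)$, which is \eqref{e:re1}. For the final summability statement I would not discard the slack: retaining the nonnegative gap $\tfrac{n(t-2)+(t-1)^2}{t^2}\big(F(\yy_{n-1})-F(\xx^\star)\big)$ on the left of the telescoped inequality and summing over $n$ bounds it by $\mathcal{E}_1+\sum_n\theta_n^{-2}\ee_n<\infty$. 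Since this coefficient grows like a multiple of $n$ for $t>2$, this yields $\sum_{n\ge 1} n\big((f+h)(\yy_{n-1})-(f+h)(\xx^\star)\big)<\infty$.
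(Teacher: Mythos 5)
Your proposal is correct and follows essentially the same route as the paper's own proof: the identical Lyapunov energy $\theta_{n-1}^{-2}\big(F(\yy_{n-1})-F(\xx^\star)\big)+\tfrac{1}{2\gamma}\|\zz_n-\xx^\star\|^2$, the same one-step inequality (which the paper obtains by rewriting the update as a forward--backward step with the tilted function $f_n=f+\tfrac{1}{\gamma}\scal{\cdot}{\ww_n-\xx_n}$ and invoking a standard prox-descent lemma, exactly the descent-plus-subgradient computation you describe and defer to \Cref{t:2}), and the same telescoping based on $\tfrac{1}{\theta_{n-1}^2}-\tfrac{1-\theta_n}{\theta_n^2}=\tfrac{n(t-2)+(t-1)^2}{t^2}\ge 0$, with the slack term retained to get the summability claim. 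If anything, you are slightly more careful than the paper in flagging that the coefficient of $s_{n-1}$ grows linearly in $n$ only when $t>2$, a point the paper's proof glosses over despite stating the theorem for $t\ge 2$.
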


Intriguingly, the condition \eqref{e:sum1} is satisfied in this particular ``worst case'' example. Indeed,  we have $\nabla h(\xx_n) = \xx_n$, and hence, it follows that 
    \begin{equation*}
    \xx_n - \ww_n - \yy_n \in V_{1}^{\perp}.
    \end{equation*}
    Since $\yy_{n-1}\in V_1$, we have 
    $$\scal{\yy_{n-1}-\xx_n}{\xx_n-\ww_n} = \scal{\yy_n}{\yy_{n-1}}.$$ 
    By the same way, we also have 
    $$\scal{\yy_{n}-\xx_{n}}{\xx_{n-1}-\ww_{n-1}} = \scal{\yy_n}{\yy_{n-1}}.$$ 
    Subtracting these equalities, we see that our condition holds for the fast convergence. 
    These results are also numerically illustrated in Figure \ref{fig:abslow}.

\begin{proof}
 Set $F = f+h$ and  $f_n = f + \frac{1}{\gamma}\scal{\cdot}{\ww_n-\xx_n} $. Then
 \begin{alignat}{2}
 \quad &\yy_n = \prox_{\gamma f}(2\xx_n-\ww_n - \gamma\nabla h(\xx_n)) \notag\\ 
 \quad\Leftrightarrow\; &2\xx_n-\ww_n - \gamma\nabla h(\xx_n) -\yy_n \in \gamma \partial f (\yy_n)\notag\\
  \quad\Leftrightarrow\; &\xx_n- \gamma\nabla h(\xx_n) -\yy_n \in \gamma \partial f(\yy_n)+\ww_n-\xx_n \notag\\
    \quad\Leftrightarrow\; &\xx_n- \gamma\nabla h(\xx_n) -\yy_n \in \gamma\partial( f+ \tfrac{1}{\gamma}\scal{\cdot}{\ww_n-\xx_n})(y_n) \notag\\
       \quad\Leftrightarrow\; & \yy_n = \prox_{\gamma f_n}(\xx_n-\gamma \nabla h(\xx_n)). \notag
 \end{alignat}  
 Set $F_n = f_n + h$ and $\yy^\star = (1-\theta_n)\yy_{n-1} + \theta_n \xx^\star.$
 Then, it follows from \cite[Lemma 8]{Bonettini2017} that
 \begin{alignat*}{2}
  F_n(\yy_n) + \frac{1}{2\gamma}\|\yy^\star-\yy_n \|^2\leq F_n(\yy^\star) + \frac{1}{2\gamma}\|\yy^\star-
 \xx_n \|^2 .
  \end{alignat*}
 Hence 
 \begin{alignat*}{2}
 F(\yy_n) + \frac{1}{\gamma}\scal{\yy_n-\yy^\star}{\ww_n-\xx_n}+ \frac{1}{2\gamma}\|\yy^\star-\yy_n \|^2 \leq F(\yy^\star) +  \frac{1}{2\gamma}\|\yy^\star-
 \xx_n \|^2
  \end{alignat*}
 or
  \begin{alignat*}{2}
 F(\yy_n) + \frac{1}{2\gamma}\|\overline{\xx}_{n+1}-\yy^\star\|^2\leq F(\yy^\star) +  \frac{1}{2\gamma}\|\yy^\star-
 \xx_n \|^2+ \frac{1}{2\gamma}\|\ww_n-\xx_n \|^2.
  \end{alignat*}
    Now, using the convexity of $h$, we have 
  \begin{alignat*}{2}
  F(\yy_n) - F(\xx^\star) + & \frac{1}{2\gamma}\|\overline{\xx}_{n+1}-\yy^\star\|^2 
 & \leq (1-\theta_n)[ F(\yy_{n-1}) - F(\xx^\star) ] +  \frac{1}{2\gamma}\|\yy^\star-
 \xx_n \|^2+ \frac{1}{2\gamma}\|\ww_n-\xx_n \|^2.
    \end{alignat*}
     We have 
     \begin{alignat*}{2}
     \frac{1}{2\gamma}\|\yy^\star-
 \xx_n \|^2+ \frac{1}{2\gamma}\|\ww_n-\xx_n \|^2& = \frac{1}{2\gamma} \| \ww_n-\yy^\star\|^2+ \frac{1}{\gamma}\scal{\yy^\star-\xx_n}{\ww_n-\xx_n}.\\
   \end{alignat*} 
    Set 
    \begin{alignat*}{2}
    \zz_{n} = \overline{\xx}_n + \frac{1-\theta_{n-1}}{\theta_{n-1}}(\overline{\xx}_n-\overline{\xx}_{n-1}).
    \end{alignat*}
    Then 
     \begin{alignat}{2}
     \label{e:tet1}
  F(\yy_n) - F(\xx^\star) +  \frac{\theta_{n}^2}{2\gamma}\|\zz_{n+1}-\xx^\star\|^2\leq (1-\theta_n)[ F(\yy_{n-1}) - F(\xx^\star) ] + \frac{\theta_{n}^2}{2\gamma}\|\zz_n-\xx^\star \|^2 +\xi_n,
    \end{alignat}
    where 
    \begin{equation*}
    \xi_n= \theta_n\frac{(1-\theta_n)}{\gamma}\scal{\zz_n-\zz_{n+1}}{\ww_{n-1}-\xx_{n-1}} + \frac{1}{\gamma}\scal{\yy^\star-\xx_n}{\ww_n-\xx_n} .
      \end{equation*}
Now, we derive that the sequence 
$(\theta_{n}^{-2} (F(\yy_n) - F(\xx^\star)) +  \frac{1}{2\gamma}\|\zz_{n+1}-\xx^\star\|^2)_{n\in\NN_+}$ is bounded. Hence,
\begin{equation*}
F(\yy_n) - F(\xx^\star)=\mathcal{O}(\theta_{n}^2) = \mathcal{O}(1/n^2),
\end{equation*}
which proves the desired result \eqref{e:re1}.  Let us set $t_n = 1/\theta_n$ and 
$s_n =F(\yy_n) - F(\xx^\star) $. Then, it follows from \eqref{e:tet1} that 
\begin{equation*}
t^{2}_n s_n -t^{2}_{n-1}s_{n-1}+  (t^{2}_{n-1}-t^{2}_n+t_n) s_{n-1} \leq \frac{1}{2\gamma}\|\zz_n-\xx^\star \|^2- \frac{1}{2\gamma}\|\zz_{n+1}-\xx^\star\|^2 + t^{2}_n\xi_n.
\end{equation*}
Summing from $n=2$ to $n= N$ we get 
\begin{alignat*}{2}
t^{2}_N s_N -t^{2}_{1}s_{1} +\sum_{n=2}^N (t^{2}_{n-1}-t^{2}_n+t_n) s_{n-1} \leq \frac{1}{2\gamma}\|\zz_2-\xx^\star \|^2- \frac{1}{2\gamma}\|\zz_{N+1}-\xx^\star\|^2
+\sum_{n=2}^Nt^{2}_n\xi_n\notag\\
\leq \frac{1}{2\gamma}\|\zz_2-\xx^\star \|^2
+\sum_{n\in\NN_+}t^{2}_n\xi_n,
\end{alignat*}
which implies that $\sum_{n=2}^N (t^{2}_{n-1}-t^{2}_n+t_n) s_{n-1} < +\infty$. Since $t^{2}_{n-1}-t^{2}_n+t_n \geq n(a-2)/a^2$, we get $(ns_{n-1})_{n\geq 2}$ is summable. 
\end{proof}

\subsubsection*{Acknowledgements}

This work was supported by the European Commission under Grant ERC Future Proof. 

\subsection*{References}
\bibliographystyle{ieeetr}
\bibliography{references.bib}

\end{document}